\def\E{{\mathbb E}}
\def\P{{\mathbb P}}
\newcommand{\rr}{{\mathbb R}}
\newcommand{\ncom}{\newcommand}
\ncom{\ul}{\underline}
\ncom{\beq}{\begin{equation}}
\ncom{\eeq}{\end{equation}}
\ncom{\bea}{\begin{eqnarray*}}
\ncom{\eea}{\end{eqnarray*}}
\ncom{\beqa}{\begin{eqnarray}}
\ncom{\eeqa}{\end{eqnarray}}
\ncom{\nno}{\nonumber}
\ncom{\non}{\nonumber}
\ncom{\ds}{\displaystyle}
\ncom{\half}{\frac{1}{2}}
\ncom{\mbx}{\makebox{.25cm}}
\ncom{\hs}{\mbox{\hspace{.25cm}}}
\ncom{\rar}{\rightarrow}
\ncom{\Rar}{\Rightarrow}
\ncom{\noin}{\noindent}
\ncom{\bc}{\begin{center}}
\ncom{\ec}{\end{center}}
\ncom{\sz}{\scriptsize}
\ncom{\rf}{\ref}
\ncom{\s}{\sqrt{2}}
\ncom{\sgm}{\sigma}
\ncom{\Sgm}{\Sigma}
\ncom{\psgm}{\sigma^{\prime}}
\ncom{\dt}{\delta}
\ncom{\Dt}{\Delta}
\ncom{\lmd}{\lambda}
\ncom{\Lmd}{\Lambda}
\ncom{\Th}{\Theta}
\ncom{\e}{\eta}
\ncom{\eps}{\epsilon}
\ncom{\pcc}{\stackrel{P}{>}}
\ncom{\lp}{\stackrel{L_{p}}{>}}
\ncom{\dist}{{\rm\,dist}}
\ncom{\sspan}{{\rm\,span}}
\ncom{\re}{{\rm Re\,}}
\ncom{\im}{{\rm Im\,}}
\ncom{\sgn}{{\rm sgn\,}}
\ncom{\ba}{\begin{array}}
\ncom{\ea}{\end{array}}
\ncom{\hone}{\mbox{\hspace{1em}}}
\ncom{\htwo}{\mbox{\hspace{2em}}}
\ncom{\hthree}{\mbox{\hspace{3em}}}
\ncom{\hfour}{\mbox{\hspace{4em}}}
\ncom{\vone}{\vskip 2ex}
\ncom{\vtwo}{\vskip 4ex}
\ncom{\vonee}{\vskip 1.5ex}
\ncom{\vthree}{\vskip 6ex}
\ncom{\vfour}{\vspace*{8ex}}
\ncom{\norm}{\|\;\;\|}
\ncom{\integ}[4]{\int_{#1}^{#2}\,{#3}\,d{#4}}
\ncom{\vspan}[1]{{{\rm\,span}\{ #1 \}}}
\ncom{\dm}[1]{ {\displaystyle{#1} } }
\ncom{\ri}[1]{{#1} \index{#1}}
\newtheorem{theorem}{\bf Theorem}[section]
\newtheorem{remark}{\bf Remark}[section]
\newtheorem{proposition}{Proposition}[section]
\newtheorem{lemma}{Lemma}[section]
\newtheorem{corollary}{Corollary}[section]
\newtheoremstyle
    {remarkstyle}
    {}
    {11pt}
    {}
    {}
    {\bfseries}
    {:}
    {     }
    {\thmname{#1} \thmnumber{#2} }
\theoremstyle{remarkstyle}
\begin{document}

\newpage

\begin{center}
{\Large \bf Time-Changed Poisson Processes}\\
\end{center}
\vone
\begin{center}
{\bf  A. Kumar$^a$, Erkan Nane$^{b}$,  and P. Vellaisamy$^{a}$}\\
$^{a}${\it Department of Mathematics,
Indian Institute of Technology Bombay,\\ Mumbai-400076, India.}\\
$^{b}${\it Department of Mathematics and Statistics, Auburn University,\\
Auburn, AL 36849 USA.}\\

\end{center}

\vtwo
\begin{center}
\noindent{\bf Abstract}
\end{center}
We consider time-changed Poisson processes, and derive the
governing difference-differential  equations (DDE) these processes. In particular, we consider the
time-changed Poisson processes where the the time-change is inverse Gaussian, or its hitting time process,  and
discuss the  governing DDE's.
The stable subordinator, inverse stable subordinator and
their iterated versions are also considered as time-changes.  DDE's corresponding to probability
mass functions of these time-changed processes are obtained.
Finally, we obtain a new  governing  partial differential equation for the 
tempered stable  subordinator of index $0<\beta<1,$ when $\beta $ is a
rational number. We then use this result to obtain the governing DDE for the mass function of  Poisson process time-changed by tempered stable subordinator.  Our results extend and  complement  the results in Baeumer et al. \cite{B-M-N} and Beghin et al. \cite{BO-1} in several directions.

\vone \noindent{\it Key words:} Hitting times; inverse
Gaussian
 process; stable processes; time-changed process, subordination; tempered stable processes;  difference-differential equation.
\vtwo

\setcounter{equation}{0}
\section{Introduction}

Recently there has been an increasing  interest  to consider time-changed  stochastic processes that yield solutions of fractional Cauchy problems, or  solutions of  higher order partial differential equations (PDE). The factional Cauchy problems can be used to model  various phenomena in a
wide range of scientific areas including physics, telecommunications, turbulence, image processing, biology, bioengineering,
hydrology and finance, see \cite{B-M-N, BO-1, H-K-U, M-S-limitctrw, M-S}. There is an interesting  connection between continuous time random walks and fractional Cauchy problems, see \cite{M-N-V, M-S-limitctrw, M-S}

It is well known that the Poisson process $N(t)$ with parameter $\lambda>0$ solves the following difference-differential equation (DDE)
\beq
\frac{d}{dt}p_k(t)=-\lambda p_k(t)+\lambda p_{k-1}(t),
\eeq
where $p_k(t) = \P\{N(t)=k\}$ is the probability mass function (pmf). The fractional Poisson process $N^{\beta}(t)$, which is a generalization of the Poisson process, solves the following fractional DDE (see e.g. Laskin (2003))
\beq
\frac{d^{\beta}}{dt^{\beta}}p^{\beta}_k(t) = -\lambda p^{\beta}_k(t)+\lambda p^{\beta}_{k-1}(t),
\eeq
where $p^{\beta}_k(t) = \P\{N^{\beta}(t)=k\}$ and
\begin{equation}\label{CaputoDef}
\frac{\partial^\beta u(t,x)}{\partial
t^\beta}=\frac{1}{\Gamma(1-\beta)}\int_0^t \frac{\partial
u(r,x)}{\partial r}\frac{dr}{(t-r)^\beta}
\end{equation}
for $0<\beta<1$, denotes the Caputo fractional derivative (see e.g. Caputo (1967)). Let $\tilde{u}(s,x)=\int_{0}^{\infty}e^{-st}u(t,x)dt = \mathcal{L}_t(u(t,x))$ be the Laplace transform (LT) of $u(t,x)$ with respect to the variable $t$. Then  Laplace transform is given by
\begin{equation}\label{CaputoLT}
\mathcal{L}_t\left(\frac{\partial^\beta u(t,x)}{\partial
t^\beta}\right)=s^\beta \tilde u(s,x)-s^{\beta-1} u(0,x).
\end{equation}

The inverse Gaussian (IG) process $G(t)$ (or IG subordinator) has
been found useful in financial modeling and is defined by (see
Applebaum, 2009, p.\ 54) \beq\label{ig} G(t) = \inf\{s > 0; B(s) +
\gamma s > \delta t \}, \eeq where $B(t)$ is the standard Brownian
motion. Note that $G(t)\sim$ IG$(\delta t, \gamma)$, the inverse
Gaussian distribution with density \beq\label{ig-density} g(x,t) =
\displaystyle{(2\pi)}^{-1/2} (\delta t) x^{-3/2}e^{\delta\gamma
t-\frac{1}{2}(\frac{\delta^2t^2}{ x} + \gamma^2 x)},~~x>0. \eeq
Also, note that \beq H(t) = \inf\{s\geq 0: G(s)>t\} \eeq denotes
its first hitting time process.

 In this paper, we
consider the Poisson process time-changed by $G(t)$ and  by the
process $H(t)$, and investigate their properties. We consider also
the problem of time-change by stable and tempered stable
process, and derive the underlying PDE's. We only treat the case where the process and the time changes are assumed to be  independent.  Our results extend and complement the results in Baeumer et al. \cite{B-M-N} and Orsingher et al. \cite{BO-1} in several directions.

\setcounter{equation}{0}
\section{ IG and its hitting time process as time-changes}
Let $N(t)$ be the Poisson process and $G(t)$ be the IG subordinator. First we consider the time-changed process $N(G(t))$.
The probability mass function $\hat{p}_k(t) = \P(N(G(t))=k)$ of the time-changed process $N(G(t))$ is obtained by
 the standard conditioning argument as
 \beq
 \begin{split}
 \hat{p}_k(t)&= \int_{0}^{\infty}\frac{e^{-\lambda x}(\lambda x)^k}{k!}{(2\pi)}^{-1/2} (\delta t) x^{-3/2}e^{\delta\gamma
t-\frac{1}{2}(\frac{\delta^2t^2}{ x} + \gamma^2 x)}dx
 \\
 &=\sqrt{\frac{2}{\pi}}(\delta t)e^{\delta\gamma t}\frac{\lambda^k}{k!}\left(\frac{\delta t}{\sqrt{\gamma^2+2\lambda}}\right)^{k-1/2}K_{k-1/2}(\delta t\sqrt{\gamma^2+2\lambda}),
 \end{split}
 \eeq
 where $K_{\nu}(z)$ is the modified Bessel function of third kind with
index $\nu$, defined by (see e.g. Abramowitz and Stegun (1992))
\beq\label{bessel}
K_{\nu}(\omega) = \displaystyle\frac{1}{2}\int_{0}^{\infty}x^{\nu-1}e^{-\frac{1}{2}\omega(x+x^{-1})}dx, ~\omega>0.
\eeq
Since, as $t\rightarrow\infty$, $\frac{N(t)}{t}\rightarrow\frac{1}{\lambda}$, a.s. and  $\frac{G(t)}{t}\rightarrow\frac{\delta}{\gamma}$, a.s. (see e.g. Bertoin (1996), p. 92),
we have
\beq
\lim_{t\rightarrow\infty}\frac{N(G(t))}{t}\rightarrow\frac{\delta}{\lambda\gamma},~~\mbox{a.s.}
\eeq
or equivalently $N(G(t))\sim\frac{\delta}{\lambda\gamma}t$, a.s.
This shows that the subordinated process does not explode in any finite interval of time. \\

 Using the result (see J\o rgenson (1992))
 \bea
\E(G(t))^q = \sqrt{\frac{2}{\pi}}\delta(\frac{\delta }{\gamma})^{q-1/2}t^{q+1/2}e^{\delta\gamma t}K_{q-1/2}(\delta\gamma t),
\eea
we easily obtain
\bea
\E N(G(t)) = \frac{\lambda\delta t}{\gamma}\,\, and\,\,
\mbox{Var}(N(G(t))) = \frac{\lambda\delta t}{\gamma}+\lambda^2\sqrt{\frac{2}{\pi}}(\delta t)(\frac{\delta t}{\gamma})^{3/2}e^{\delta\gamma t}K_{3/2}(\delta\gamma t)- (\frac{\lambda\delta t}{\gamma})^2.
\eea

\begin{remark}
 For the particular case $\delta =1$ and $\gamma=0$,
 \begin{align*}
 \P(N(G(t)) = k) &= \int_{0}^{\infty}\frac{e^{-\lambda x}(\lambda x)^k}{k!}\frac{1}{\sqrt{2\pi x^3}}te^{-t^2/{2x}}dx\\
 &=\int_{0}^{\infty}\frac{e^{-\lambda t^2 y}(\lambda t^2 y)^k}{k!}\frac{1}{\sqrt{2\pi y^3}}e^{-1/{2y}}dx ~~\mbox{(put $x=t^2y$)}\\
 &= \int_{0}^{\infty}\frac{e^{-\lambda t^2 y}(\lambda t^2 y)^k}{k!}f_{Y}(y)dy,
 \end{align*}
 where $f_{Y}(y) = \frac{1}{\sqrt{2\pi y^3}}e^{-1/{2y}},~y>0$.
 Hence $N(G(t)) \stackrel{d} = N(\lambda t^2 Y)$, a mixed Poisson process evaluated at time $t^2$.
\end{remark}
\begin{remark}
 Note that $N(G(t))$ is not a renewal process, since here $G(t)$ is not an inverse subordinator (see Kingman (1964); Grandell (1976)). Indeed, it is the hitting time of $B(s)+\gamma s$, which is not a subordinator.
\end{remark}

\noindent The density function $g(x,t)$ of $G(t)$ solves (see Kumar {\it et al.} (2011))
\beq\label{IGgov}
\frac{\partial^2 }{\partial t^2}g(x,t) - 2\delta\gamma \frac{\partial}{\partial t}g(x,t) =  2 \delta^2 \frac{\partial }{\partial x}g(x,t).
\eeq
We have the following result.
\begin{proposition}
 The pmf $\hat{p}_k(t)$ of the subordinated process $N(G(t))$ solves the following DDE:
\beq\label{pg}
\frac{d^2}{dt^2} \hat{p}_k(t) -2\delta\gamma \frac{d}{dt}\hat{p}_k(t)=2\delta^2\lambda[\hat{p}_k(t)-\hat{p}_{k-1}(t)].
\eeq
\end{proposition}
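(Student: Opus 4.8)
The plan is to derive the DDE for $\hat p_k(t)$ by transferring the PDE \eqref{IGgov} for the density $g(x,t)$ of $G(t)$ through the conditioning identity
\[
\hat p_k(t)=\int_0^\infty \frac{e^{-\lambda x}(\lambda x)^k}{k!}\,g(x,t)\,dx =\int_0^\infty q_k(x)\,g(x,t)\,dx,
\]
where $q_k(x)=e^{-\lambda x}(\lambda x)^k/k!$ is the Poisson pmf. Since differentiation in $t$ acts only on $g(x,t)$, I would apply $\partial_t^2-2\delta\gamma\,\partial_t$ under the integral sign (justified by the rapid decay of $g$ and its $t$-derivatives, which follows from the explicit IG density \eqref{ig-density}), and then use \eqref{IGgov} to replace $(\partial_t^2-2\delta\gamma\,\partial_t)g(x,t)$ by $2\delta^2\,\partial_x g(x,t)$. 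This gives
\[
\frac{d^2}{dt^2}\hat p_k(t)-2\delta\gamma\frac{d}{dt}\hat p_k(t)=2\delta^2\int_0^\infty q_k(x)\,\frac{\partial}{\partial x}g(x,t)\,dx.
\]

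The next step is an integration by parts in $x$ on the right-hand side:
\[
\int_0^\infty q_k(x)\,\partial_x g(x,t)\,dx=\bigl[q_k(x)g(x,t)\bigr]_0^\infty-\int_0^\infty q_k'(x)\,g(x,t)\,dx.
\]
Here I would check that the boundary term vanishes: as $x\to\infty$ both $q_k(x)$ and $g(x,t)$ decay exponentially, and as $x\to 0^+$ the factor $x^{-3/2}e^{-\delta^2 t^2/(2x)}$ in $g$ forces $g(x,t)\to 0$ faster than any power, while $q_k(x)$ stays bounded. The key elementary computation is then $q_k'(x)=\lambda\bigl(q_{k-1}(x)-q_k(x)\bigr)$, which is just the classical Poisson recurrence (with the convention $q_{-1}\equiv 0$). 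Substituting, $\int_0^\infty q_k'(x)g(x,t)\,dx=\lambda(\hat p_{k-1}(t)-\hat p_k(t))$, so the right-hand side becomes $2\delta^2\lambda(\hat p_k(t)-\hat p_{k-1}(t))$, which is exactly \eqref{pg}.

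The main obstacle is not the algebra but the analytic bookkeeping: justifying the interchange of the $t$-derivatives with the integral over $x$, and confirming that the boundary terms from the $x$-integration by parts genuinely vanish. Both hinge on sharp control of $g(x,t)$ and $\partial_x g(x,t)$, $\partial_t g(x,t)$, $\partial_t^2 g(x,t)$ near $x=0$ and $x=\infty$; the explicit form \eqref{ig-density} makes this routine but it is the step that needs care. An alternative, cleaner route that sidesteps some of this is to work on the Laplace-transform side: with $\tilde g(x,s)$ known in closed form for the IG subordinator, one can transform \eqref{IGgov} in $t$, integrate against $q_k(x)$, and read off an algebraic identity in $s$ for $\mathcal L_t(\hat p_k)$; inverting recovers \eqref{pg}. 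I would present the direct argument above as the main proof and mention the Laplace-transform verification as a remark.
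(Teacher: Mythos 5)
Your proposal is correct and follows essentially the same route as the paper's proof: differentiate under the integral, apply the operator $\partial_t^2-2\delta\gamma\,\partial_t$ to $g(x,t)$ via \eqref{IGgov}, integrate by parts in $x$ with vanishing boundary terms, and use the Poisson recurrence $p_k'(x)=-\lambda p_k(x)+\lambda p_{k-1}(x)$. Your added attention to the boundary-term and interchange justifications (and the Laplace-transform remark) is a welcome refinement but not a different argument.
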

\begin{proof} We have
\beq
\hat{p}_k(t)= \int_{0}^{\infty}p_k(x)g(x,t)dx.
\eeq
This implies by dominated convergence theorem
\beq
\frac{d}{dt}\hat{p}_k(t)=\int_{0}^{\infty}p_k(x)\frac{\partial}{\partial t}g(x,t)dx\ \ and \ \ \frac{d^2}{dt^2}\hat{p}_k(t)=\int_{0}^{\infty}p_k(x)\frac{\partial^2}{\partial t^2}g(x,t)dx.
\eeq
Using the fact $
\lim_{x\to\infty}g(x,t)=0=\lim_{x\to0}g(x,t)$, we obtain
\begin{align*}
\left(\frac{d^2}{dt^2}-2\delta\gamma\frac{d}{dt}\right)\hat{p}_k(t)&=
\int_{0}^{\infty}p_k(x)\left(\frac{\partial^2}{\partial t^2}-2\delta\gamma\frac{\partial}{\partial t}\right)g(x,t)dx\\
&= 2\delta^2\int_{0}^{\infty}p_k(x)\frac{\partial}{\partial x}g(x,t) dx\\
&= -2\delta^2\int_{0}^{\infty}\frac{d}{dx}p_k(x)g(x,t)dx\\
&= -2\delta^2 \int_{0}^{\infty}[-\lambda p_k(x)+\lambda p_{k-1}(x)]g(x,t)dx\\
&=2\delta^2\lambda[\hat{p}_k(t)-\hat{p}_{k-1}(t)],
\end{align*}
proving the result.
\end{proof}

\noindent Next we consider the subordination of hitting time of the process $G(t)$. The right continuous first hitting time of the process $G(t)$ is  defined by
\beq
H(t) = \inf\{s\geq 0: G(s)>t\}.
\eeq
The process $H(t)$ has monotonically increasing continuous sample paths and it is not a L\'evy process.  The process $N(H(t))$ is a renewal process and the inter arrival times follows a tempered Mittag-Leffler distribution.
Using Theorem 4.1 of Meerschaert {\it et al.} (2010), the process $N(H(t))$ is a renewal process whose iid waiting times $J_n$ satisfy
\beq
\P(J_n>x) = E(e^{-\lambda H(x)}),
\eeq
and with LT
\beq
\E(e^{-sJ_n}) = \frac{\lambda}{\lambda+\delta(\sqrt{\gamma^2+2s}-\gamma)}.
\eeq
For $\delta = 1/\sqrt{2}$, the rhs of the above equation reduce to
\beq
\frac{\lambda}{\lambda+ (s+a)^{\beta} - a^{\beta}},
\eeq
where $\beta=1/2$ and $a=\gamma^2/2$.
Also, the density of $J_n$ is
\beq
f_{J_n}(x) = g(x)e^{-ax}\frac{\eta+a^{\beta}}{\eta},
\eeq
with
\beq
g(x) = \frac{d}{dx}[1-E_{\beta}(-\eta x^{\beta})] ~~\mbox{and}~~\eta = \lambda -a^{\beta}
\eeq
(see Example 5.7 of Meerschaert {\it et al.} (2011)).

\begin{remark}
The condition (5.3) given in Meerschaert {\it et. al.}(2010) is satisfied by the process $G(t)$, since
\begin{align*}
\int_{0}^{1}x|ln x|\pi(x) dx &= \int_{0}^{1}x|ln x|\frac{\delta}{\sqrt{2\pi x^3}}e^{-\gamma^2x/2}dx\\
&\leq \frac{\delta}{\sqrt{2\pi}}\int_{0}^{\infty}y^{-1/2}|ln y|dy\\
&= \frac{\delta}{\sqrt{2\pi}}\int_{0}^{\infty}ze^{-z/2}dz= \frac{4\delta}{\sqrt{2\pi}}<\infty.
\end{align*}
Thus their Theorem 5.2 is applicable on IG density.
\end{remark}
\begin{proposition}
 The pmf $\tilde{p}_k(t) = \P(N(H(t))=k)$satisfies
  \begin{align}\label{eq1.22}
\frac{d}{dt}\tilde{p}_k(t) &= \frac{1}{2\delta^2}\left[(\lambda^2-2\delta\gamma\lambda)\tilde{p}_k(t) +
(-2\lambda^2+2\delta\gamma\lambda)\tilde{p}_{k-1}(t)+\lambda^2\tilde{p}_{k-2}(t)\right]-\delta_0(t)\tilde{p}_k(0)\nonumber\\
&\hspace{2cm}+ \frac{1}{2\delta^2}h(0,t)p_k'(0),
\end{align}
where $p'_k(0)= -\lambda, \lambda$ and $0$ for $k=0, 1$ and $k\geq 2$ respectively.
\end{proposition}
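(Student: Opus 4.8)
The plan is to pass to the Laplace transform in the time variable $t$, exploiting that $H$ is the inverse of the IG subordinator $G$. Conditioning gives $\tilde p_k(t)=\int_0^\infty p_k(x)h(x,t)\,dx$, where $p_k(x)=e^{-\lambda x}(\lambda x)^k/k!$ and $h(x,t)$ denotes the density of $H(t)$ in the state variable. From the duality $\{H(t)\le x\}=\{G(x)>t\}$ together with $\E e^{-sG(x)}=e^{-x\psi(s)}$, where $\psi(s)=\delta(\sqrt{\gamma^2+2s}-\gamma)$ is the Laplace exponent of $G$ (read off from the $IG(\delta x,\gamma)$ law, or from the waiting-time transform $\E e^{-sJ_n}=\lambda/(\lambda+\psi(s))$ recorded above), one gets $\int_0^\infty e^{-st}h(x,t)\,dt=\psi(s)e^{-x\psi(s)}/s$. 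Combining this with the elementary Gamma integral $\int_0^\infty p_k(x)e^{-x\psi(s)}\,dx=\lambda^k(\lambda+\psi(s))^{-(k+1)}$ and writing $\phi_k(s):=\mathcal L_t(\tilde p_k(t))$,
\beq
\phi_k(s)=\frac{\psi(s)}{s}\cdot\frac{\lambda^k}{(\lambda+\psi(s))^{k+1}}.
\eeq

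The one algebraic fact that turns this (non-rational) transform into a bona fide difference-differential equation is that $\psi(s)$ solves the quadratic $\psi(s)^2+2\delta\gamma\,\psi(s)=2\delta^2 s$ --- just rationalize the square root --- so that multiplying by $\psi^2+2\delta\gamma\psi$ is the same as multiplying by $2\delta^2 s$, i.e.\ (up to an initial value) by $2\delta^2$ times a time derivative. From the displayed formula I read off the first-order recursion $\psi(s)\phi_k(s)=\lambda\phi_{k-1}(s)-\lambda\phi_k(s)+(\psi(s)/s)\,\mathbf 1_{\{k=0\}}$, with $\phi_{-1}\equiv 0$, and then expand $2\delta^2 s\,\phi_k=\psi(\psi\phi_k)+2\delta\gamma(\psi\phi_k)$, substituting the recursion for $\psi\phi_k$ and once more for $\psi\phi_{k-1}$. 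The ``bulk'' part assembles into $(\lambda^2-2\delta\gamma\lambda)\phi_k+(-2\lambda^2+2\delta\gamma\lambda)\phi_{k-1}+\lambda^2\phi_{k-2}$, which is precisely the bracket in (\ref{eq1.22}); the remaining terms, after one more use of $\psi^2+2\delta\gamma\psi=2\delta^2 s$, collapse to $(\lambda\psi(s)/s)(\mathbf 1_{\{k=1\}}-\mathbf 1_{\{k=0\}})+2\delta^2\,\mathbf 1_{\{k=0\}}$.

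It then remains to recognize these leftover terms: the Poisson equation at $x=0$ gives $p_k'(0)=-\lambda p_k(0)+\lambda p_{k-1}(0)=\lambda(\mathbf 1_{\{k=1\}}-\mathbf 1_{\{k=0\}})$ (so $p_k'(0)=-\lambda,\lambda,0$ for $k=0,1,\ge 2$), while $\mathbf 1_{\{k=0\}}=\tilde p_k(0)$ and $\psi(s)/s=\mathcal L_t(h(0,t))$. Hence $2\delta^2 s\,\phi_k(s)$ equals the bracket of (\ref{eq1.22}) plus $p_k'(0)\,\mathcal L_t(h(0,t))+2\delta^2\tilde p_k(0)$; dividing by $2\delta^2$, using $s\phi_k(s)=\mathcal L_t(\tfrac{d}{dt}\tilde p_k(t))+\tilde p_k(0)$, and inverting the transform yields (\ref{eq1.22}), the $\delta_0(t)\tilde p_k(0)$ term being the contribution of the jump of $\tilde p_k$ (equivalently, of $h(x,0^+)=\delta_0(x)$) at the origin. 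An alternative route, closer to the proof of Proposition 2.1, is to first check from the same Laplace computation (or from (\ref{IGgov}) via the inverse relationship) that $h$ satisfies $\partial_t h=\frac{1}{2\delta^2}(\partial_{xx}h-2\delta\gamma\,\partial_x h)$ for $x,t>0$ together with the boundary relation $\partial_x h(0,t)-2\delta\gamma\,h(0,t)=-2\delta^2\delta_0(t)$, and then to integrate by parts twice against $p_k(x)$, the Poisson equation producing $p_k''+2\delta\gamma p_k'$ and the boundary relation absorbing the stray $\partial_x h(0,t)$.

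The two computations (the Laplace manipulation, or the double integration by parts) are mechanical once the quadratic relation for $\psi$ is in hand; the real care is in the bookkeeping at the origin --- extracting from that manipulation exactly the pieces that reassemble, via the Poisson equation at $x=0$, into $h(0,t)p_k'(0)$ and into the initial-value/Dirac term, and pinning down the sign convention for $\tfrac{d}{dt}\tilde p_k$ at $t=0$. One also needs the routine justifications: differentiating under the integral, interchanging $\partial_x$ with $\mathcal L_t$, and the behaviour $h(0,t)\sim\delta\sqrt{2/\pi}\,t^{-1/2}$ as $t\downarrow 0$ (from $\psi(s)/s\sim\delta\sqrt{2/s}$), which keeps every term locally integrable so the inversion is legitimate.
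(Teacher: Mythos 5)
Your proof is correct, but your primary route is genuinely different from the paper's. The paper works entirely in the time domain: it starts from the governing PDE \eqref{h-eq-second-order} for $h(x,t)$, integrates $p_k(x)$ against it, performs two integrations by parts using the boundary facts $\lim_{x\to\infty}h(x,t)=\lim_{x\to\infty}h_x(x,t)=0$ and $h_x(0,t)=2\delta\gamma\,h(0,t)$, and then substitutes $p_k''=\lambda^2(p_k-2p_{k-1}+p_{k-2})$ --- exactly the ``alternative route'' you sketch in your third paragraph. Your main argument instead works in the Laplace domain: you compute $\phi_k(s)=\frac{\psi(s)}{s}\frac{\lambda^k}{(\lambda+\psi(s))^{k+1}}$ explicitly from the inverse-subordinator identity, and convert the quadratic relation $\psi^2+2\delta\gamma\psi=2\delta^2 s$ (which is just the transform-side statement of \eqref{IGgov}/\eqref{h-eq-second-order}) into the DDE via the recursion $\psi\phi_k=\lambda\phi_{k-1}-\lambda\phi_k+(\psi/s)\mathbf{1}_{\{k=0\}}$; I checked the algebra and the bulk and boundary terms do assemble exactly as you claim, with $\psi(s)/s=\mathcal{L}_t(h(0,t))$ identifying the $h(0,t)p_k'(0)$ term. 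What your route buys is an explicit closed form for $\mathcal{L}_t(\tilde p_k)$ and a cleaner audit of the boundary/initial contributions; what it costs is that the justification of differentiating under the integral and of Laplace inversion must be supplied separately, whereas the paper's route needs only the already-established PDE for $h$. One caveat: your Laplace computation, done carefully, yields $\tfrac{d}{dt}\tilde p_k(t)=\frac{1}{2\delta^2}[\cdots]+\frac{1}{2\delta^2}h(0,t)p_k'(0)$ for $t>0$ with \emph{no} residual $-\delta_0(t)\tilde p_k(0)$; that term in \eqref{eq1.22} is an artifact of the convention built into \eqref{h-eq-second-order}, and your closing remark attributing it to ``the jump of $\tilde p_k$ at the origin'' papers over a sign that does not quite come out of your own transform identity $s\phi_k=\mathcal{L}_t(\tilde p_k')+\tilde p_k(0)$. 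Since the two statements agree for all $t>0$, this is a bookkeeping discrepancy inherited from the paper rather than an error in your argument, but you should state explicitly which distributional interpretation of $\tfrac{d}{dt}\tilde p_k$ at $t=0$ you are using.
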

\begin{proof} The density function $h(x,t)$ of $H(t)$ satisfies the following PDE (Vellaisamy and Kumar (2011))
\beq\label{h-eq-second-order}
\frac{\partial^2}{\partial x^2}h(x,t)-2\delta\gamma\frac{\partial}{\partial x}h(x,t) = 2\delta^2\frac{\partial}{\partial t}h(x,t)+2\delta^2h(x,0)\delta_0(t).
\eeq

\noindent We have  for $ k\geq 0$
\begin{align*}
\frac{d}{dt}\tilde{p}_k(t)&= \int_{0}^{\infty}p_k(x)\frac{\partial}{\partial t}h(x,t)dx\\
&= \frac{1}{2\delta^2}\int_{0}^{\infty}p_k(x)\left[\frac{\partial^2}{\partial x^2}h(x,t)-2\delta\gamma\frac{\partial}{\partial x}h(x,t)\right]dx-\delta_0(t)\int_{0}^{\infty}p_k(x)h(x,0)dx
\end{align*}

\noindent Now

\beq
\begin{split}
\int_{0}^{\infty}p_k(x)\frac{\partial^2}{\partial x^2}h(x,t)dx&=p_k(x)\frac{\partial}{\partial x}h(x,t)\Big|_0^\infty-\int_{0}^{\infty}p'_k(x)\frac{\partial}{\partial x}h(x,t)dx\\
&=p_k(x)\frac{\partial}{\partial x}h(x,t)\Big|_0^\infty -h(x,t)p'_k(x)\Big|_0^\infty+\int_{0}^{\infty}p^{(2)}_k(x)h(x,t)dx\\
&= -2\delta\gamma p_k(0)h(0,t) + h(0,t)p'_k0x)+\int_{0}^{\infty}p^{(2)}_k(x)h(x,t)dx,
\end{split}
\eeq
since $\lim_{x\rightarrow\infty}h_x(x,t) =0=\lim_{x\rightarrow\infty}h(x,t)$ (see Vellaisamy and Kumar (2011)).
Also
\begin{align}
\int_{0}^{\infty}p_k(x)\frac{\partial}{\partial x}h(x,t)dx &=p_k(x)h(x,t)|_0^\infty-\int_{0}^{\infty}p'_k(x)h(x,t)dx\nonumber\\
&= -p_k(0)h(0,t)-\int_{0}^{\infty}p'_k(x)h(x,t)dx
\end{align}
Using the fact $h_x(0,t) = 2\delta\gamma h(0,t)$, we get
\begin{align}\label{hitting}
\frac{d}{dt}\tilde{p}_k(t) &= \frac{1}{2\delta^2}\left[\int_{0}^{\infty}p^{(2)}_k(x)h(x,t)dx+ 2\delta\gamma \int_{0}^{\infty} p'_k(x)h(x,t)dx+h(0,t)p'_k(0)\right]\nonumber\\
&\hspace{2cm}-\tilde{p}_k(0)\delta_0(t).
\end{align}
The result now follows by substituting
$p^{(2)}_k(x) = \lambda^2[p_k(x)-2p_{k-1}(x)+p_{k-2}(x)]$
in \eqref{hitting}.
\end{proof}

\setcounter{equation}{0}
\section{Stable and inverse stable subordinators as time changes}
Let $f(x,t)$ be the density of a $\beta$-stable subordinator
$D(t)$ with index $0<\beta<1$ . Then the Laplace transform of
$D(t)$ is given by
$$
\E(e^{-sD(t)})=\int_0^\infty e^{-sx}f(x,t)dx=e^{-ts^\beta},
$$
where $s^{\beta}$ is called the Laplace exponent. The  density
$f(x,1)$ of $D(1)$ is infinitely differentiable on $(0,\infty)$,
with the asymptotics as follows: (see Uchaikin and Zolotarev
(1999))
\begin{equation}\label{D-asymptotic-0}
f(x,1)\sim
\frac{(\frac{\beta}{x})^{\frac{2-\beta}{2(1-\beta)}}}{\sqrt{2\pi\beta(1-\beta)}}
e^{-(1-\beta)(\frac{x}{\beta})^{-\frac{\beta}{1-\beta}}}, \ \
\mathrm{as}\ \ x\to 0;
\end{equation}
\begin{equation}\label{D-asymptotic-large}
f(x,1)\sim \frac{\beta}{\Gamma(1-\beta)x^{1+\beta}}, \ \
\mathrm{as} \ \ x\to \infty.
\end{equation}
Note that from \eqref{D-asymptotic-0} and
\eqref{D-asymptotic-large}, we have
\begin{equation}\label{eq3.3n}
\lim_{x\rightarrow 0} f(x,1)= f(0,1)=0~~and~~\lim_{x\rightarrow
\infty} f(x,1)= f(\infty,1)=0.
\end{equation}

\noindent Let $D^*(t)= \inf\{s\geq 0: B(s)>t/\sqrt{2}\}$ denote the first time when the  Brownian motion (with variance $2t$) hits the barrier $t$. The governing equation corresponding to the density function $g(x,t)$ of $D^*(t)$ can be obtained by putting $\delta=1/\sqrt{2}$ and
$\gamma =0$ in the equation \eqref{IGgov}. In this case, $D^*(t)$ is a stable subordinator of index $\frac12$ with Laplace exponent $s^{1/2}$. We have the following proposition.
\begin{proposition}\label{prop-2} Let $D_1^*, D_2^*, \cdots, D_n^*$ be independent $1/2$-stable subordinators.
Then for the iterated composition defined by $D_*^{(n)}(t)=D^*_1oD^*_2o\cdots oD^*_n(t)$, the pmf $\tilde{p}_k(t)= P(N(D_*^{(n)}(t))=k)$,
satisfies the following equation
\beq
\frac{d^{2^n}}{dt^{2^n}}\tilde{p}_k(t) = \lambda [\tilde{p}_k(t)-\tilde{p}_{k-1}(t)].
\eeq
\end{proposition}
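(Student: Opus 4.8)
The plan is to first identify the law of the iterated process $D_*^{(n)}(t)$, then derive the partial differential equation satisfied by its density $g_n(x,t)$, and finally transfer that equation to the pmf $\tilde p_k(t)$ by exactly the conditioning-and-integration-by-parts device already used in Propositions 2.1 and 2.2. For \textbf{Step 1}, each $D_i^*$ has Laplace exponent $u\mapsto u^{1/2}$, so by independence and the composition rule for subordinators, iterated conditioning gives
\begin{align*}
\E\bigl(e^{-u\,D_*^{(n)}(t)}\bigr)=e^{-t\,\psi_n(u)},\qquad \psi_n(u)=\underbrace{u^{1/2}\circ u^{1/2}\circ\cdots\circ u^{1/2}}_{n\ \text{times}}=u^{1/2^n}.
\end{align*}
Hence $D_*^{(n)}(t)$ is a stable subordinator of index $2^{-n}$; in particular $g_n$ is smooth on $(0,\infty)\times(0,\infty)$, one has $g_n(0,t)=0$ and $g_n(x,t)\to0$ as $x\to\infty$ together with all its $x$-derivatives (the analogues of \eqref{D-asymptotic-0}, \eqref{D-asymptotic-large}, \eqref{eq3.3n} for index $2^{-n}$), and $\int_0^\infty e^{-ux}g_n(x,t)\,dx=e^{-tu^{1/2^n}}$.

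For \textbf{Step 2} I would establish $\partial_t^{2^n}g_n(x,t)=\partial_x g_n(x,t)$. Differentiating the identity $\int_0^\infty e^{-ux}g_n(x,t)\,dx=e^{-tu^{1/2^n}}$ in $t$ exactly $2^n$ times produces on the right $(-u^{1/2^n})^{2^n}e^{-tu^{1/2^n}}=u\,e^{-tu^{1/2^n}}$, because $2^n$ is even and $(u^{1/2^n})^{2^n}=u$; on the left it produces $\int_0^\infty e^{-ux}\,\partial_t^{2^n}g_n(x,t)\,dx$. On the other hand, one integration by parts together with $g_n(0,t)=0$ gives $\int_0^\infty e^{-ux}\,\partial_x g_n(x,t)\,dx=u\int_0^\infty e^{-ux}g_n(x,t)\,dx=u\,e^{-tu^{1/2^n}}$. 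Thus $\partial_t^{2^n}g_n$ and $\partial_x g_n$ have the same Laplace transform in $x$ for every $u>0$, and the PDE follows by uniqueness. (An alternative route is induction on $n$: the case $n=1$ is \eqref{IGgov} with $\delta=1/\sqrt2$, $\gamma=0$; for the inductive step write $g_n(x,t)=\int_0^\infty g_{n-1}(x,s)g_1(s,t)\,ds$ with $g_1$ the $1/2$-stable density, apply $(\partial_t^2)^{2^{n-1}}$ inside the integral, replace $\partial_t^2 g_1$ by $\partial_s g_1$ using the base case, integrate by parts $2^{n-1}$ times in $s$ with all boundary terms vanishing, and invoke the induction hypothesis $\partial_s^{2^{n-1}}g_{n-1}=\partial_x g_{n-1}$.)

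For \textbf{Step 3}, conditioning on $D_*^{(n)}(t)$ gives $\tilde p_k(t)=\int_0^\infty p_k(x)g_n(x,t)\,dx$ with $p_k(x)=e^{-\lambda x}(\lambda x)^k/k!$. Differentiating $2^n$ times under the integral sign — legitimate by dominated convergence, a $t$-locally integrable dominating function being supplied by the stable-density estimates of Step 1 — and then using Step 2,
\begin{align*}
\frac{d^{2^n}}{dt^{2^n}}\tilde p_k(t)&=\int_0^\infty p_k(x)\,\frac{\partial}{\partial x}g_n(x,t)\,dx\\
&=\Bigl[p_k(x)g_n(x,t)\Bigr]_0^\infty-\int_0^\infty p_k'(x)\,g_n(x,t)\,dx=-\int_0^\infty p_k'(x)\,g_n(x,t)\,dx,
\end{align*}
the boundary term vanishing because $p_k$ is bounded and $g_n(x,t)\to0$ as $x\to0$ and as $x\to\infty$. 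Since $p_k'(x)=-\lambda p_k(x)+\lambda p_{k-1}(x)$, the remaining integral equals $-\lambda\tilde p_k(t)+\lambda\tilde p_{k-1}(t)$, which is the claimed identity $\tfrac{d^{2^n}}{dt^{2^n}}\tilde p_k(t)=\lambda[\tilde p_k(t)-\tilde p_{k-1}(t)]$.

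The probabilistic content is light once Step 1 is in hand; the real work is analytic bookkeeping — smoothness of $g_n$, legitimacy of differentiating under the two integral signs, and, if one follows the inductive route rather than the Laplace shortcut, the vanishing of all boundary terms in the $2^{n-1}$ successive integrations by parts. All of this reduces to the small-$x$ and large-$x$ asymptotics of the index-$2^{-n}$ stable density, cf. \eqref{D-asymptotic-0}--\eqref{eq3.3n}, which is why I would prefer the Laplace-transform argument of Step 2 to keep the estimates to a minimum.
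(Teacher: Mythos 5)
Your proof is correct. Its main line --- identify $D_*^{(n)}(t)$ as a stable subordinator of index $1/2^n$ via composition of Laplace exponents, derive the governing equation $\partial_t^{2^n}g_n=\partial_x g_n$ from the Laplace transform identity $\int_0^\infty e^{-ux}g_n(x,t)\,dx=e^{-tu^{1/2^n}}$, and then perform a single integration by parts against $p_k$ --- is precisely the argument the paper records as the ``Alternative proof of Proposition \ref{prop-2}'' in Remark 3.2, where the PDE $\partial_t^{2^n}k^*=\partial_x k^*$ is quoted from DeBlassie's Lemma 3.1 (the case $\beta=k/m$ with $k=1$, $m=2^n$) rather than rederived by your Laplace-transform computation; your derivation of that PDE is a self-contained and correct substitute, needing only $g_n(0,t)=0$, which follows from the small-$x$ asymptotics \eqref{D-asymptotic-0}. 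The paper's displayed proof of the proposition is instead the induction you relegate to a parenthesis: starting from the $n=1$ case of \eqref{pg} with $\delta=1/\sqrt2$, $\gamma=0$, writing $\tilde p_k$ as an iterated integral against the $1/2$-stable densities, and repeatedly trading two $t$-derivatives for one derivative in the next integration variable via integration by parts. The trade-off is as you describe: the inductive route uses only the second-order equation \eqref{IGgov} but requires bookkeeping of boundary terms at every stage, while the Laplace/DeBlassie route concentrates all the analysis into the single order-$2^n$ PDE and one integration by parts. Both are valid, and your write-up covers both.
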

\begin{proof} We prove this by induction. The case $n=1$ is obtained  after putting $\delta=1/\sqrt{2}$ and $\gamma=0$
in \eqref{pg}. Let $k(x,t)$ denote the density of $D_1^*(t)$,
which can also be obtained by putting $\delta=1/\sqrt{2}$ and
$\gamma=0$ in \eqref{ig-density}. For the case $n=2$, \beq
\tilde{p}_k(t)=
\int_{0}^{\infty}\int_{0}^{\infty}{p}_k(x_1)k(x_1,x_2)k(x_2,t)dx_1dx_2.
\eeq Integrating by parts and then using \eqref{D-asymptotic-0}
and \eqref{D-asymptotic-large}, we get
\begin{align*}
\frac{d^4}{dt^4}\tilde{p}_k(t)&= \int_{0}^{\infty}\int_{0}^{\infty}{p}_k(x_1)k(x_1,x_2)\frac{\partial^4}{\partial t^4}k(x_2,t)dx_1dx_2\\
&= \int_{0}^{\infty}\int_{0}^{\infty}{p}_k(x_1) \frac{\partial^2}{\partial x_2^2}k(x_1,x_2)k(x_2,t)dx_1dx_2\\
&= \int_{0}^{\infty}\int_{0}^{\infty}{p}_k(x_1) \frac{\partial}{\partial x_1}k(x_1,x_2)k(x_2,t)dx_1dx_2\\
&= - \int_{0}^{\infty}\int_{0}^{\infty}\frac{d}{dx_1}{p}_k(x_1) k(x_1,x_2)k(x_2,t)dx_1dx_2\\
&=  \lambda [\tilde{p}_k(t)-\tilde{p}_{k-1}(t)].
\end{align*}

\noindent The general case also follows in similar way.
\end{proof}

\begin{remark} (i)
The composition of two stable subordinators is a stable subordinator.  Let $D_1, D_2$ be two independent stable subordinators with Laplace exponents $c_is^{\beta_i}$, $i=1,2$.
Then
$$
\E[e^{-sD_1(D_2(t))}]=E[e^{-c_1s^{\beta_1}D_2(t)}]=e^{-tc_2c_1^{\beta_2}s^{\beta_1\beta_2}},
$$
and hence $D_1(D_2(t))$ is a stable subordinator of index $\beta_1\beta_2$. This implies that the composition $E_1(E_2(t))$ is the inverse stable subordinator of index $\beta_1\beta_2$.
Hence, by induction, we get

 $$E_1^{1/2}(E_2^{1/2}(E_3^{1/2}(\cdots E_k^{1/2}(t))))=E^{1/2^k}(t).
$$
\end{remark}


\noindent (ii) By Remark 2.1, we see that $D_*^{(n)}(t)$ is a stable subordinator of index $1/2^n$ and
\begin{equation}\label{composing-n-s-sub}
\E(e^{-sD^{(n)}(t)})=\exp(-ts^{1/2^n}).
\end{equation}

Note also also that $N(D^{(n)}(t))$ is also a L\'evy Process, as $N(t)$ is Poisson process and $D^{(n)}(t)$ is a stable subordinator (by Theorem 30.1 in Sato (1999)) and it has Laplace transform
$$
\E(e^{-sN(D^{(n)}(t))})=e^{-t(\log(\tilde{\mu}(s)))^{1/2^n}}=\exp(-t(\lambda(e^{-s}-1))^{1/2^n}), \ s\geq 0
$$
where $\tilde{\mu}(s)=\E(e^{-sN(1)})=\exp(\lambda(e^{-s}-1))$. By Theorem 30.1 in Sato (1999), the Fourier exponent of $N(D^{(n)}(t))$ can be written as
$$
\E(e^{izN(D^{(n)}(t))})=\exp(-t(\lambda(e^{iz}-1))^{1/2^n}), \ z\in \rr.
$$

\begin{remark}
Let $0<\beta=\frac km <1$ for $k,m$ relatively prime integers,
and let $D(t)$ be a stable subordinator of index $\beta$ with  $\E(e^{-sD(t)})=e^{-cts^{k/m}}$.
In this case, the density $f(x,t)$ of $D(t)$ is a solution of
\begin{equation}\label{pde-stable-sub}
\frac{\partial^{m}}{\partial t^{m}}f(x,t)
=(-c)^{m}\frac{\partial^k}{\partial x^k}f(x,t), \ \ x, t>0,
\end{equation}
see Lemma 3.1 in DeBlassie (2004). Hence, the density $k^*(x,t)$
of $D_*^{(n)}(t)$ satisfies
\begin{equation}\label{pde-stable-sub-Gn}
\frac{\partial^{2^n}}{\partial t^{2^n}}k^*(x,t)
=\frac{\partial}{\partial x}k^*(x,t), \ \ x, t>0,
\end{equation}

\noindent {\bf Alternative proof of Proposition \ref{prop-2}}
Using \eqref{pde-stable-sub-Gn}  one can obtain an  equation for
$\tilde{p}_k(t)=\P(N(D_*^{(n)}(t))=k)$ as follows:
\begin{equation}\begin{split}
\frac{d^{2^n}}{dt^{2^n}}\tilde{p}_k(t)&= \int_{0}^{\infty}p_k(x)\frac{\partial^{2^n}}{\partial t^{2^n}}k^*(x,t)dx\\
&=\int_{0}^{\infty}p_k(x)\frac{\partial}{\partial x}k^*(x,t)dx\\
&=p_k(x)k^*(x,t)|_0^\infty-\int_{0}^{\infty}k^*(x,t)\frac{\partial}{\partial x}p_k(x)dx \\
&=\lambda [\tilde{p}_k(t)-\tilde{p}_{k-1}(t)],
\end{split}
\end{equation}
which follows by integration by parts and using
\eqref{D-asymptotic-0} and \eqref{D-asymptotic-large}.


\end{remark}
\begin{remark}
\noindent Consider the standard Cauchy process $C(t)$ with density
function \beq q(x,t) = \frac{t}{\pi(x^2+t^2)}, ~x\in \mathbb{R}.
\eeq The Cauchy process is a symmetric $\beta$-stable process with
index $\beta=1$.  Its Fourier transform (FT)
\begin{align*}
\E(e^{-iuC(t)})=\hat{q}(u,t) = e^{-t|u|}~~\mbox{or}
~\frac{\partial}{\partial t}\hat{q}(u,t) = -|u| \hat{q}(u,t).
\end{align*}
This implies
\begin{align*}
\frac{\partial^2}{\partial t^2}\hat{q}(u,t) = |u|^2\hat{q}(u,t) =
-(iu)^2\hat{q}(u,t).
\end{align*}
Invert the FT to get \beq \frac{\partial^2}{\partial t^2}q(x,t)
=-\frac{\partial^2}{\partial x^2}q(x,t), \eeq
 since $(iu)^2\hat{q}$ is the FT of $\partial^2q/\partial x^2$.
 The pmf $q_k(t) =
P(N(|C(t)|)=k)$, $k\geq 0$, $t>0$, satisfies the equation
 \beq\label{Cauchy-sub}
 \begin{split}
\frac{d^2}{dt^2}q_k(t)& = -\lambda^2(1-\bigtriangledown)^2 q_k(t) -
\frac{2}{\pi t}\frac{d}{dx}p_k(x)|_{x=0}\\
&=-\lambda^2(1-\bigtriangledown)^2 q_k(t)+
\frac{2}{\pi t}\lambda(1-\bigtriangledown) p_k(x)|_{x=0},
\end{split}
 \eeq
where $\bigtriangledown f_k(t)=f_{k-1}(t)$, the backward shift
operator.

\noindent  Let $f$ be in the domain of the generator, $-\lambda
(1-\bigtriangledown)$, of the Poisson process, so that
$$-\lambda
(1-\bigtriangledown)f(k)=-\lambda(f(k)-f(k-1)).$$ Then
$u(t,k)=\E_k(f(N(|C(t)|)))$ solves

\beq\label{Cauchy-sub-f}
 \begin{split}
\frac{d^2}{dt^2}u(k,t)& = -\lambda^2(1-\bigtriangledown)^2 u(k,t) -
\frac{2}{\pi t}\frac{d}{dx}u(k,x)|_{x=0}\\
&=-\lambda^2(1-\bigtriangledown)^2 u(k,t)+
\frac{2}{\pi t}\lambda(1-\bigtriangledown) f(k),\\
u(0,k)&=f(k).
\end{split}
 \eeq
A general result that includes Equation \eqref{Cauchy-sub-f} as a special case was first proved in Nane \cite{nane3}
\end{remark}

\noindent We next look at the subordination to inverse stable
subordinators.
 The inverse stable subordinator $E(t)$ ( of  $D(t)$ of index
$0<\beta<1$) has density
\begin{equation}\label{E-density}
\begin{split}
m(x,t)&=\frac{\partial}{\partial x}P(E(t)\leq x)=\frac{\partial}{\partial x}(1-P(D(x)\leq t))\\
&=-\frac{\partial}{\partial x}\int_0^{\frac{t}{x^{1/\beta}}}f(u, 1)du=(t/\beta)f(tx^{-1/\beta }, 1)x^{-1-1/\beta}
\end{split}\end{equation}
using the scaling property of the density
$f(x, t)=(t/\beta)f(tx^{-1/\beta }, 1)x^{-1-1/\beta}$ (see Bertoin
(1996)).

\noindent It is clear from \eqref{E-density} that the density
$m(x,t)$ is in $C^\infty((0,\infty)\times(0,\infty))$. Some
additional properties of $m(x,t)$ are given in the following
Lemma whose proof follows from \eqref{D-asymptotic-0},
\eqref{D-asymptotic-large}, \eqref{E-density}, and by taking
Laplace transforms.

\begin{lemma} [Lemma 2.1, Hahn et al (2010)] 
The density  $m(x,t)$ of $E(t)$ satisfies
\begin{description}
\item[(a)] $\lim_{t\to +0}m(x,t)=\delta_0(x)$ in the sense of the
topology of the space of tempered distribution
$\mathcal{D}'(\rr)$; \item[(b)] $\lim_{x\to
+0}m(x,t)=\frac{t^{-\beta}}{\Gamma(1-\beta)}$, $t>0$; \item[(c)]
$\lim_{x\to \infty}m(x,t)=0$, $t>0$; \item[(d)]$\int_0^\infty
e^{-st}m(x,t)dt=s^{\beta-1}e^{-xs^\beta}$.
\end{description}
\end{lemma}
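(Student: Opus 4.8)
The plan is to verify each of the four assertions by combining the explicit formula \eqref{E-density} for $m(x,t)$ with the known asymptotics \eqref{D-asymptotic-0}, \eqref{D-asymptotic-large} of the stable density and with Laplace-transform bookkeeping. Throughout I would keep in mind the scaling relation $f(x,t)=(t/\beta)f(tx^{-1/\beta},1)x^{-1-1/\beta}$ already recorded in the excerpt.

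\medskip

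\noindent\textit{Part (d).} I would start here because it is the cleanest and because (a)--(c) are most conveniently read off from it or from the same computation. Writing $\int_0^\infty e^{-st}m(x,t)\,dt$ and substituting \eqref{E-density}, I would interchange the $t$-integral with the defining integral $P(E(t)\le x)=1-\int_0^{t x^{-1/\beta}} f(u,1)\,du$ via an integration by parts in $t$ (or, equivalently, use $m(x,t)=\tfrac{\partial}{\partial x}P(E(t)\le x)$ and the fact that $E(t)\le x \iff D(x)\ge t$, so $P(E(t)\le x)=P(D(x)>t)$). Then $\int_0^\infty e^{-st}P(D(x)>t)\,dt = \frac{1}{s}\bigl(1-\E e^{-sD(x)}\bigr)=\frac{1}{s}(1-e^{-xs^\beta})$, and differentiating in $x$ gives $\int_0^\infty e^{-st}m(x,t)\,dt = s^{\beta-1}e^{-xs^\beta}$. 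Fubini is justified because all integrands are nonnegative.

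\medskip

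\noindent\textit{Parts (b) and (c).} For (c), as $x\to\infty$ the argument $tx^{-1/\beta}\to 0$, so by \eqref{D-asymptotic-0} $f(tx^{-1/\beta},1)$ decays like a stretched exponential in $x^{1/(1-\beta)}$, and the polynomial prefactor $x^{-1-1/\beta}$ only helps; hence $m(x,t)\to 0$. For (b), as $x\to +0$ the argument $tx^{-1/\beta}\to\infty$, so \eqref{D-asymptotic-large} gives $f(tx^{-1/\beta},1)\sim \beta\,\Gamma(1-\beta)^{-1}(tx^{-1/\beta})^{-1-\beta}$; plugging into \eqref{E-density}, the powers of $x$ cancel ($x^{(1+\beta)/\beta}\cdot x^{-1-1/\beta}=x^0$) and the powers of $t$ collect to $t\cdot t^{-1-\beta}=t^{-\beta}$, leaving exactly $t^{-\beta}/\Gamma(1-\beta)$. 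Alternatively both limits, or at least consistency checks, can be extracted from (d): letting $x\to\infty$ in $s^{\beta-1}e^{-xs^\beta}$ and using continuity of the Laplace transform gives the $t$-Laplace transform of $\lim_x m(x,t)$ to be $0$; and the $x\to 0$ limit of (d) is $s^{\beta-1}$, which is precisely the Laplace transform of $t^{-\beta}/\Gamma(1-\beta)$.

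\medskip

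\noindent\textit{Part (a).} This is the one genuine subtlety. I would argue that for any Schwartz test function $\varphi$, $\int_0^\infty m(x,t)\varphi(x)\,dx \to \varphi(0)$ as $t\to+0$. The quickest route is again via (d): since $\int_0^\infty m(x,t)\,dx = \lim_{s\to 0}\int_0^\infty\! \int_0^\infty e^{-st}m(x,t)\,dt\,dx$-type reasoning shows $m(\cdot,t)$ is a probability density for each $t$ (indeed $E(t)$ is a well-defined nonnegative random variable), it suffices to show $E(t)\to 0$ in probability as $t\to 0$, which is immediate from $P(E(t)>x)=P(D(x)<t)\to 0$ for each fixed $x>0$ because $D(x)>0$ a.s. Convergence in probability of nonnegative random variables to $0$ gives weak convergence of the laws to $\delta_0$, which is exactly statement (a) phrased in the language of tempered distributions; one only needs the mild observation that weak convergence plus the uniform tail control coming from (c)/\eqref{D-asymptotic-0} upgrades testing against bounded continuous functions to testing against the full Schwartz class. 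The main obstacle, such as it is, is being careful that the distributional limit is interpreted correctly and that the interchange of limits (in $s$, in $t$, in the integrals) is licensed; since everything in sight is nonnegative and the asymptotics \eqref{D-asymptotic-0}--\eqref{D-asymptotic-large} give integrable domination, no real analytic difficulty arises, and the lemma follows.
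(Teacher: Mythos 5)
Your proposal is correct and follows exactly the route the paper indicates: the paper gives no detailed proof, only the remark that the lemma ``follows from \eqref{D-asymptotic-0}, \eqref{D-asymptotic-large}, \eqref{E-density}, and by taking Laplace transforms'' (citing Hahn et al.), and your computations for (b)--(d) and the weak-convergence argument for (a) are a sound fleshing-out of precisely those ingredients. The only cosmetic remark is that for (a) no ``upgrade'' is needed: Schwartz test functions are already bounded and continuous, so weak convergence of the probability laws of $E(t)$ to $\delta_0$ immediately gives the distributional limit.
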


\begin{lemma}[Lemma 2.2,  Hahn et al (2010); Theorem 4.1, Meerschaert and Scheffler (2008)]
The density $m(x,t)$ is the fundamental solution of
\begin{equation}\label{density-pde}
\frac{\partial^\beta m(x,t)}{\partial t^\beta}=-\frac{\partial
m(x,t)}{\partial x}-\frac{t^{-\beta}}{\Gamma (1-\beta)}\delta_0(l)
\end{equation}
in the sense of tempered distributions.
\end{lemma}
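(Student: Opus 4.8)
The plan is to verify \eqref{density-pde} after applying the Laplace transform in the time variable, under which it collapses to an elementary first-order relation in $x$, and then to invoke injectivity of the Laplace transform. Throughout, \eqref{density-pde} is read, for each fixed $t>0$, as an identity in the space of tempered distributions in $x\in\rr$ — with $m(x,t)$ extended by $0$ for $x<0$, and the symbol $\delta_0(l)$ understood as $\delta_0(x)$ — while the additional requirement that makes $m$ a \emph{fundamental} solution is the initial condition $\lim_{t\to 0^+}m(\cdot,t)=\delta_0$ in $\mathcal D'(\rr)$, which is exactly part (a) of the preceding Lemma.

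First I would assemble the Laplace-transform ingredients. Write $\tilde m(x,s)=\int_0^\infty e^{-st}m(x,t)\,dt$. Part (d) of the preceding Lemma gives $\tilde m(x,s)=s^{\beta-1}e^{-xs^\beta}$, and the smoothness of $m$ on $(0,\infty)\times(0,\infty)$ noted just after \eqref{E-density}, together with the decay of $f$ in \eqref{D-asymptotic-0}--\eqref{D-asymptotic-large}, lets one differentiate under the integral sign, so that $\mathcal L_t(\partial_x m)(x,s)=\partial_x\tilde m(x,s)=-s^{2\beta-1}e^{-xs^\beta}$ for $x>0$. The elementary identity $\mathcal L_t(t^{-\beta})=\Gamma(1-\beta)\,s^{\beta-1}$ for $0<\beta<1$ gives $\mathcal L_t\big(\tfrac{t^{-\beta}}{\Gamma(1-\beta)}\big)=s^{\beta-1}$. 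Finally, the Caputo transform rule \eqref{CaputoLT}, applied with the distributional initial datum $m(x,0)=\delta_0(x)$ from part (a), gives $\mathcal L_t(\partial_t^\beta m)(x,s)=s^\beta\tilde m(x,s)-s^{\beta-1}\delta_0(x)$.

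Applying $\mathcal L_t$ to the two sides of \eqref{density-pde} now yields
\[
s^\beta\tilde m(x,s)-s^{\beta-1}\delta_0(x)\;=\;-\,\partial_x\tilde m(x,s)-s^{\beta-1}\delta_0(x).
\]
The singular terms $-s^{\beta-1}\delta_0(x)$ cancel: the role of the source term $-\tfrac{t^{-\beta}}{\Gamma(1-\beta)}\delta_0(x)$ in \eqref{density-pde} is exactly to absorb the initial-value contribution $-s^{\beta-1}m(x,0)$ coming from \eqref{CaputoLT}, its coefficient being the boundary value $m(0^+,t)$ of part (b). What remains, $s^\beta\tilde m(x,s)=-\partial_x\tilde m(x,s)$, holds identically because $\partial_x\big(s^{\beta-1}e^{-xs^\beta}\big)=-s^\beta\big(s^{\beta-1}e^{-xs^\beta}\big)$. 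By injectivity of the Laplace transform on the relevant class of $\mathcal D'(\rr)$-valued functions of $t$, \eqref{density-pde} follows for every $t>0$, and the fundamental-solution property is part (a).

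The one genuinely delicate point — and the main obstacle — is to justify these manipulations in the distributional setting: one must know that $t\mapsto m(\cdot,t)$ is a $\mathcal D'(\rr)$-valued (in fact $\mathcal S'(\rr)$-valued) function whose Laplace transform in $t$ is well defined (harmless here, since it is $s^{\beta-1}e^{-xs^\beta}$), that the Caputo derivative \eqref{CaputoDef} and its transform rule \eqref{CaputoLT} retain their meaning when the initial datum $m(\cdot,0)$ is the singular measure $\delta_0$ rather than a function, and that $\partial_x$ may be interchanged with the $t$-integral. These are precisely the technical points established in Hahn et al.\ (2010) and in Meerschaert and Scheffler (2008), from which the Lemma is quoted; granting them, the short computation above is the whole proof. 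An alternative route would exploit the identity $P(E(t)\le x)=1-P(D(x)\le t)$, connecting $m$ with the stable density $f$ and its space-fractional governing equation, but the Laplace transform in $t$ is by far the most transparent.
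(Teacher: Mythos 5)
The paper itself offers no proof of this lemma: it is imported verbatim from Hahn et al.\ (2010) and Meerschaert--Scheffler (2008), so there is no in-paper argument to compare against. Your Laplace-transform verification is the natural one (and is in the same spirit as the paper's remark just before the preceding lemma, that the properties of $m(x,t)$ ``follow \dots by taking Laplace transforms''); the computation is essentially right: both sides of \eqref{density-pde} transform to $s^{2\beta-1}e^{-xs^\beta}-s^{\beta-1}\delta_0(x)$, and injectivity of the Laplace transform finishes the job, with the fundamental-solution property supplied by part (a) of the preceding lemma.

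One point should be made explicit, because as written your conventions are not quite self-consistent. You announce that \eqref{density-pde} is an identity in $\mathcal{S}'(\rr)$ with $m(\cdot,t)$ extended by zero to $x<0$, but you then compute $\partial_x\tilde m(x,s)=-s^{2\beta-1}e^{-xs^\beta}$ classically on $x>0$. If $\partial_x$ denoted the distributional derivative of the zero-extension, it would pick up the jump $\tilde m(0^+,s)\delta_0(x)=s^{\beta-1}\delta_0(x)$ (since $m(0^+,t)=t^{-\beta}/\Gamma(1-\beta)>0$ by part (b)), and your two transformed sides would then differ by exactly $s^{\beta-1}\delta_0(x)$ --- the equation would fail by one copy of the source term. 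The lemma is true precisely under the reading you actually compute with: $\partial_x m$ is the classical derivative on $(0,\infty)$, and the source $-\frac{t^{-\beta}}{\Gamma(1-\beta)}\delta_0(x)$ is exactly the boundary jump that turns $-\partial_x m$ (classical) into the distributional derivative of the zero-extension; correspondingly $\partial^\beta_t$ is the Caputo derivative of the $\mathcal{S}'$-valued map $t\mapsto m(\cdot,t)$ with initial datum $\delta_0$, so that \eqref{CaputoLT} contributes the $-s^{\beta-1}\delta_0(x)$ you use. This is also the reading the paper relies on later, in the proof of its Proposition 3.2, where $\int_0^\infty q_k^*(x)\left[-\partial_x m-\frac{t^{-\beta}}{\Gamma(1-\beta)}\delta_0(x)\right]dx$ is integrated by parts and the $\delta_0$ term cancels the boundary contribution $q_k^*(0)\,m(0,t)$. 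With that clarification stated, your argument is complete modulo the technical justifications you correctly defer to the cited references.
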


\noindent Next, we consider the iterated composition of $H(t)$ for
the case $\delta=1/\sqrt{2}$ and $\gamma=0$. Define $H_*^n(t) =
H^*_1o\cdots o H^*_n(t)$, where $H^*_1(t), \cdots, H_n^*(t)$ are
$n$ independent copies of $H(t)$. Let $q_k(t) = P(N(H_*^n(t))=k)$.
By Equation \eqref{composing-n-s-sub}, $H_*^n(t)$ is the inverse
of $D_*^{(n)}(t)$ with Laplace exponent $s^{1/2^n}$ .

\begin{remark}
Let $E(t)$ be the inverse of a stable subordinator of index $1/m$
then the density $m(x, t)$ of $E(t)$ satisfies
\begin{equation}\label{Et-first-time-derivative}
\begin{split}
\frac{\partial m(x, t)}{\partial t}&=(-1)^m \frac{\partial^m m(x, t)}{\partial x^m},\ \ x, t>0;\\
\frac{\partial^k m(x,t)}{\partial x^k}|_{x=0}&=t^{-(k+1)/m}\frac{(-1)^k}{\Gamma (1-\frac{(k+1)}{m})},\ \ t>0,  k= 0,1,2,\cdots, (m-2); \\
\frac{\partial^{m-1} m(x,t)}{\partial x^{m-1}}|_{x=0}&=0,\ \ t>0;\\
\lim _{x\to\infty}\frac{\partial^k m(x, t)}{\partial x^k}&=0, \ \
t>0, k=0,1,2,\cdots, (m-1).
\end{split}
\end{equation}
The PDE \eqref{Et-first-time-derivative} was obtained by Keyantuo and Lizama \cite{K-L}.
\end{remark}

\noindent Using the last remark we can easily show

\begin{theorem}\label{N-E-1-m}
Let $E(t)$  be inverse stable subordinator of index
$0<\beta=1/m<1,$ for $m=2,3,4, \cdots$. Then the pmf
$q_k(t)=\P(N(E(t))=k)$ solves the following
DDE
 \beq
 \displaystyle\frac{d}{dt}q_k(t) = (-\lambda)^{m}(1-\bigtriangledown)^{m}q_k(t)+\sum_{j=1}^{m-1}\bigg((-\lambda)^j(1-\bigtriangledown)^{j}p_k(0)\bigg)\frac{t^{-(m-j)/m}}{\Gamma (1-\frac{(m-j)}{m})}.
 \eeq
 \begin{proof}[Proof of Theorem \ref{N-E-1-m}]
This follows by using integration by parts and
\eqref{Et-first-time-derivative} above
\begin{equation}
\begin{split}
\frac{d}{dt}q_k(t) &= \int_{0}^{\infty}p_k(x)\frac{\partial}{\partial t}m(x,t)dx\\
& = (-1)^m\int_{0}^{\infty}p_k(x)\frac{\partial^m}{\partial x^m}m(x,t)dx\\
&=(-1)^m\sum_{j=1}^{m}(-1)^j\frac{\partial^{j-1}}{\partial x^{j-1}}p_k(x)\frac{\partial^{m-j}}{\partial x^{m-j}}m(x,t)
\bigg|_{x=0}\ +\int_{0}^{\infty}\frac{\partial^{m}}{\partial x^{m}}p_k(x)m(x,t)dx\\
&=\sum_{j=1}^{m}(-1)^j\frac{\partial^{j-1}}{\partial x^{j-1}}p_k(x)\bigg|_{x=0}t^{-(1+m-j)/m}\frac{(-1)^j}
{\Gamma (1-\frac{(1+m-j)}{m})}\ +\int_{0}^{\infty}\frac{\partial^{m}}{\partial x^{m}}p_k(x)m(x,t)dx\\
&= \sum_{j=1}^{m}\frac{\partial^{j-1}}{\partial x^{j-1}}p_k(x)\bigg|_{x=0}\frac{t^{-(1+m-j)/m}}{\Gamma
(1-\frac{(1+m-j)}{m})}+\int_{0}^{\infty}(-\lambda)^{m}(1-\bigtriangledown)^{m}p_k(x)m(x,t)dx\\
&=\sum_{j=2}^{m}\frac{\partial^{j-1}}{\partial x^{j-1}}p_k(x)\bigg|_{x=0}\frac{t^{-(1+m-j)/m}}
{\Gamma (1-\frac{(1+m-j)}{m})}+(-\lambda)^{m}(1-\bigtriangledown)^{m}q_k(t)\\
&=\sum_{j=1}^{m-1}\frac{\partial^{j}}{\partial
x^{j}}p_k(x)\bigg|_{x=0}\frac{t^{-(m-j)/m}}{\Gamma
(1-\frac{(m-j)}{m})}+(-\lambda)^{m}(1-\bigtriangledown)^{m}q_k(t)
\end{split}
\end{equation}

\noindent Note that the terms $\frac{\partial^{j-1}}{\partial
x^{j-1}}p_k(x)|_{x=0}$ might not be zero, in general. For example,
$p_k(0)=1$ for $k=0$, $p_k(0)=0$ for $k\geq 1$. Also, $
\frac{\partial}{\partial x}p_k(x)|_{x=0}=0 $ for $k\geq 2$, and it
is equal to $\lambda$ for $k=1$ and is equal to $-\lambda$ for
$k=0$.
\end{proof}

\noindent  Let $f$ be in the domain of the generator, $-\lambda
(1-\bigtriangledown)$, of the Poisson process so that
$$-\lambda
(1-\bigtriangledown)f(k)=-\lambda(f(k)-f(k-1)).$$
 Then
$u(t,k)=\E_k(f(N(E(t))))$ solves
 \beq\begin{split}
 \displaystyle\frac{d}{dt}u(t,k) &= (-\lambda)^{m}(1-\bigtriangledown)^{m}u(t,k)+\sum_{j=1}^{m-1}\bigg( (-\lambda)^{j}(1-\bigtriangledown)^{j}f(k)\bigg)\frac{t^{-(m-j)/m}}{\Gamma (1-\frac{(m-j)}{m})};\\
 u(0,k)&=f(k), k\geq 1
 \end{split}
 \eeq
 This was first proved by Baeumer et al. (2009) \cite{B-M-N}
\end{theorem}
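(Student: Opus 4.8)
The plan is to realise $q_k(t)$ as the subordination integral $q_k(t)=\int_0^\infty p_k(x)\,m(x,t)\,dx$, differentiate once in $t$, trade the time derivative for an $x$-derivative by means of the PDE \eqref{Et-first-time-derivative}, and then push the $m$ spatial derivatives off $m(x,t)$ and onto $p_k$ by repeated integration by parts. First I would justify $\frac{d}{dt}q_k(t)=\int_0^\infty p_k(x)\,\frac{\partial}{\partial t}m(x,t)\,dx$ by dominated convergence, using that $p_k$ is bounded and that $m(\cdot,t)\in C^\infty(0,\infty)$ with the integrable boundary behaviour recorded in the properties of $m(x,t)$ stated above; then the first line of \eqref{Et-first-time-derivative} gives $\frac{d}{dt}q_k(t)=(-1)^m\int_0^\infty p_k(x)\,\partial_x^m m(x,t)\,dx$.

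Next I would integrate by parts $m$ times. This yields the telescoping boundary sum $(-1)^m\sum_{j=1}^{m}(-1)^{j-1}\,\partial_x^{j-1}p_k(x)\,\partial_x^{m-j}m(x,t)\big|_0^\infty$ together with the interior term $\int_0^\infty \partial_x^m p_k(x)\,m(x,t)\,dx$. At $x=\infty$ every boundary term vanishes, since $\partial_x^{\ell}m(x,t)\to 0$ for $\ell=0,\dots,m-1$ by \eqref{Et-first-time-derivative} while the derivatives of $p_k$ stay bounded. At $x=0$ the $j=1$ term vanishes because $\partial_x^{m-1}m(x,t)\big|_{x=0}=0$, and for $j=2,\dots,m$ I would substitute $\partial_x^{m-j}m(x,t)\big|_{x=0}=(-1)^{m-j}\,t^{-(m-j+1)/m}/\Gamma\!\big(1-(m-j+1)/m\big)$; the accumulated signs collapse (the factor $(-1)^m$, the integration-by-parts sign $(-1)^{j-1}$, the sign $(-1)^{m-j}$ from the value at $x=0$, and the $-1$ from evaluating at the lower limit multiply out to $(-1)^{2m}=1$), so after the substitution $i=j-1$ the boundary contribution becomes exactly $\sum_{i=1}^{m-1}\partial_x^{i}p_k(x)\big|_{x=0}\,t^{-(m-i)/m}/\Gamma\!\big(1-(m-i)/m\big)$.

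Finally, I would iterate the Poisson DDE $\frac{d}{dx}p_k(x)=-\lambda(1-\bigtriangledown)p_k(x)$, where $\bigtriangledown$ is the backward shift in $k$, to obtain $\partial_x^{\ell}p_k(x)=(-\lambda)^\ell(1-\bigtriangledown)^\ell p_k(x)$ for all $\ell\ge 0$. Applied with $\ell=m$, and pulling the shift operator outside the integral (it commutes with $\int(\cdot)\,m(x,t)\,dx$ since $\int_0^\infty p_{k-1}(x)\,m(x,t)\,dx=q_{k-1}(t)$), the interior term equals $(-\lambda)^m(1-\bigtriangledown)^m q_k(t)$; applied with $\ell=i$ at $x=0$, the boundary sum equals $\sum_{i=1}^{m-1}(-\lambda)^i(1-\bigtriangledown)^i p_k(0)\,t^{-(m-i)/m}/\Gamma\!\big(1-(m-i)/m\big)$, which is precisely the claimed DDE. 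I expect the only real friction to be the bookkeeping in the $m$-fold integration by parts — tracking the alternating signs and the index shift, and in particular confirming that it is exactly the $j=1$ boundary term that is annihilated by $\partial_x^{m-1}m|_{x=0}=0$, so that the surviving sum runs over $i=1,\dots,m-1$ with the correct $\Gamma$-normalisations; the analytic inputs (differentiation under the integral sign and the vanishing of boundary terms at infinity) are routine given \eqref{Et-first-time-derivative} and the stated regularity of $m(x,t)$.
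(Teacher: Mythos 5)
Your proposal is correct and follows essentially the same route as the paper's own proof: differentiate the subordination integral $q_k(t)=\int_0^\infty p_k(x)m(x,t)\,dx$ in $t$, invoke the PDE \eqref{Et-first-time-derivative} to replace $\partial_t m$ by $(-1)^m\partial_x^m m$, integrate by parts $m$ times using the boundary values of $\partial_x^{j}m$ at $x=0$ and $x=\infty$ (including the vanishing of $\partial_x^{m-1}m|_{x=0}$ that kills the $j=1$ term), and iterate the Poisson DDE to convert $x$-derivatives of $p_k$ into powers of $-\lambda(1-\bigtriangledown)$. Your sign bookkeeping is in fact slightly more explicit than the paper's, but the argument is the same.
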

\begin{corollary}
The pmf $q_k(t)=\P(N(H_*^n(t))=k)$ solves the following
DDE
 \beq
 \displaystyle\frac{d}{dt}q_k(t) = \lambda^{2^n}(1-\bigtriangledown)^{2^n}q_k(t)+\sum_{j=1}^{2^n-1}\bigg((-\lambda)^j(1-\bigtriangledown)^{j}p_k(0)\bigg)
 \frac{t^{-(2^n-j)/2^n}}{\Gamma (1-\frac{(2^n-j)}{2^n})},
 \eeq

\end{corollary}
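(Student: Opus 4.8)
The plan is to obtain this Corollary as an immediate specialization of Theorem~\ref{N-E-1-m}, with $m=2^n$, once one identifies $H_*^n(t)$ with the inverse stable subordinator of index $1/2^n$. First I would recall the relevant facts already assembled in the excerpt: with $\delta=1/\sqrt2$ and $\gamma=0$ the process $D^*(t)$ is a $1/2$-stable subordinator and $H(t)=\inf\{s\ge0:G(s)>t\}$ is precisely its inverse. By the composition rule for stable subordinators (the Remark on composing stable subordinators, iterated $n$ times), $D_*^{(n)}(t)=D^*_1\circ\cdots\circ D^*_n(t)$ is a stable subordinator of index $1/2^n$ with Laplace exponent $s^{1/2^n}$, as recorded in \eqref{composing-n-s-sub}; and by the dual statement in the same Remark, the composition of the corresponding inverses $H^*_1\circ\cdots\circ H^*_n(t)=H_*^n(t)$ is the inverse stable subordinator of index $1/2^n$. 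Since the $H^*_i$ are i.i.d.\ copies, the order of composition is immaterial for the one-dimensional distributions, which is all that enters the pmf. Consequently the density of $H_*^n(t)$ is exactly the density $m(x,t)$ figuring in Theorem~\ref{N-E-1-m} with $m=2^n$, and in particular it satisfies the PDE \eqref{Et-first-time-derivative} with $m$ replaced by $2^n$ (this PDE depends on the process only through its Laplace exponent $s^{1/2^n}$, which we have just verified).

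Next I would simply invoke Theorem~\ref{N-E-1-m} with $m=2^n$: the pmf $q_k(t)=\P(N(H_*^n(t))=k)$ satisfies
\[
\frac{d}{dt}q_k(t)=(-\lambda)^{2^n}(1-\bigtriangledown)^{2^n}q_k(t)+\sum_{j=1}^{2^n-1}\Big((-\lambda)^j(1-\bigtriangledown)^{j}p_k(0)\Big)\frac{t^{-(2^n-j)/2^n}}{\Gamma\big(1-\tfrac{(2^n-j)}{2^n}\big)}.
\]
Finally, since $2^n$ is even we have $(-\lambda)^{2^n}=\lambda^{2^n}$, so the leading term becomes $\lambda^{2^n}(1-\bigtriangledown)^{2^n}q_k(t)$, and this is exactly the stated DDE. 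No further computation is needed.

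The only nontrivial point — the one I would write out with care — is the identification $H_*^n(t)\stackrel{d}{=}E^{1/2^n}(t)$; everything else is a one-line substitution into an already proved theorem. If one prefers a route that does not appeal to that distributional identity, an alternative self-contained proof is to iterate the second-order PDE \eqref{h-eq-second-order} for the density $h(x,t)$ of $H(t)$ (taken with $\delta=1/\sqrt2$, $\gamma=0$) $n$ times — exactly as the two-step computation in the proof of Proposition~\ref{prop-2} is carried out for the subordinator side — thereby producing a $2^n$-th order PDE for the density of $H_*^n(t)$, and then integrating against $p_k(x)$ while tracking the boundary terms at $x=0$ as in the proof of Theorem~\ref{N-E-1-m}; this reproduces the same DDE. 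Either way the step most in need of justification is the passage from "inverse of a composition'' to "composition of inverses,'' and I would handle it by checking it at the level of the Laplace exponent rather than pathwise.
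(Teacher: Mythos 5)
Your proposal is correct and matches the paper's (implicit) argument: the paper states the Corollary as a direct specialization of Theorem \ref{N-E-1-m} with $m=2^n$, relying on the identification of $H_*^n(t)$ as the inverse $1/2^n$-stable subordinator already recorded via \eqref{composing-n-s-sub} and Remark 3.1, with $(-\lambda)^{2^n}=\lambda^{2^n}$ accounting for the sign of the leading term. Your extra care about ``inverse of a composition'' versus ``composition of inverses'' (handled at the level of one-dimensional distributions since the copies are i.i.d.) is a welcome refinement of a point the paper passes over silently.
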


\begin{remark} Let $E(t)$ be inverse stable subordinator of index $0<\beta<1$.
Then the density  $q_k(t)= \P\big(N(E(t) )=k\big)$ solves (see
Meerschaert et al. (2011)). \beq \displaystyle
\frac{d^{\beta}}{dt^{\beta}}{q}_k(t) =
-\lambda(1-\bigtriangledown){q}_k(t).
 \eeq

\noindent In particular, when $\delta=1/\sqrt{2}$ and $\gamma=0$,
the density $q^*_k(t)= P\big(N(H_*^n(t)) =k\big)$ solves \beq
\displaystyle \frac{d^{1/2^n}}{dt^{1/2^n}}{q}^*_k(t) =
-\lambda(1-\bigtriangledown){q}^*_k(t),~~n\geq1.
 \eeq

 \noindent Consider $\tilde{q}_k(t) = P\big(N(H^n(E(t))) =k\big)$. Since $H^n(E(t))$ is inverse stable subordinator
 of index $\beta/2^n$, $\tilde{q}_k(t)$ solves
\beq \displaystyle
\frac{d^{\beta/2^n}}{dt^{\beta/2^n}}\tilde{q}_k(t) =
-\lambda(1-\bigtriangledown)\tilde{q}_k(t),~~0<\beta<1,~n\geq1.
 \eeq
\end{remark}
 Arguments similar to the ones above lead to the following result.

  \begin{proposition} The pmf $\tilde{q}_k(t) = \P\big(N(H^n(E(t))=k)$ solves
 \beq
\displaystyle \frac{d^{\beta}}{dt^{\beta}}\tilde{q}_k(t) = \lambda^{2^n}(1-\bigtriangledown)^{2^n}
\tilde{q}_k(t)+\sum_{j=1}^{2^n-1} \bigg(\frac{\partial^{j}}{\partial z^{j}}p_k(z)\bigg|_{z=0}\bigg)
 \bigg[\frac{t^{-\beta(2^n-j)/2^n}U((2^n-j)/2^n)}{\Gamma
 (1-\frac{(2^n-j)}{2^n})}\bigg],
 \eeq
 where $U(\gamma)=\E[D(1)^{\gamma\beta}]$ for $0<\gamma<1$.
 \end{proposition}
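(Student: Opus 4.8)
The plan is to realize $\tilde q_k(t)=\P(N(H^n(E(t)))=k)$ as a subordination integral against the density of the inverse $\beta$-stable subordinator $E(t)$ and then differentiate under the integral sign, proceeding exactly as in the proof of Theorem~\ref{N-E-1-m} and the Corollary above. Write $m(y,t)$ for the density of $E(t)$ and set $q^{(n)}_k(y)=\P(N(H^n(y))=k)$, where $H^n$ is the $n$-fold composition of independent copies of $H(t)$ with $\delta=1/\sqrt2,\ \gamma=0$ (so that $H^n$ is the inverse of the $1/2^n$-stable subordinator). Conditioning first on $E(t)$ gives
\beq
\tilde q_k(t)=\int_0^\infty q^{(n)}_k(y)\,m(y,t)\,dy .
\eeq
Since $m$ is $C^\infty$ on $(0,\infty)\times(0,\infty)$ and $q^{(n)}_k$ is bounded, one may move the Caputo derivative $d^\beta/dt^\beta$ inside the integral and invoke the distributional PDE \eqref{density-pde} for $m(y,t)$.

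First I would apply \eqref{density-pde} and integrate by parts once in $y$, obtaining
\beq
\frac{d^\beta}{dt^\beta}\tilde q_k(t)=-\Big[q^{(n)}_k(y)\,m(y,t)\Big]_{0}^{\infty}+\int_0^\infty\Big(\tfrac{d}{dy}q^{(n)}_k(y)\Big)m(y,t)\,dy-\frac{t^{-\beta}}{\Gamma(1-\beta)}\,q^{(n)}_k(0^+).
\eeq
The boundary term at $y=\infty$ drops because $m(y,t)\to0$ while $q^{(n)}_k$ is bounded; at $y=0$ one uses $m(y,t)\to t^{-\beta}/\Gamma(1-\beta)$ together with $H^n(0)=0$ a.s., so $q^{(n)}_k(0^+)=p_k(0)$, and the resulting boundary term $p_k(0)t^{-\beta}/\Gamma(1-\beta)$ cancels the $\delta_0(y)$ contribution exactly. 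This leaves
\beq
\frac{d^\beta}{dt^\beta}\tilde q_k(t)=\int_0^\infty\Big(\tfrac{d}{dy}q^{(n)}_k(y)\Big)m(y,t)\,dy .
\eeq

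Next I would substitute the DDE for $\tfrac{d}{dy}q^{(n)}_k(y)$ from the preceding Corollary (i.e., Theorem~\ref{N-E-1-m} with index parameter $2^n$). The leading term $\lambda^{2^n}(1-\bigtriangledown)^{2^n}q^{(n)}_k(y)$ yields $\lambda^{2^n}(1-\bigtriangledown)^{2^n}\tilde q_k(t)$ after pulling the constant-coefficient backward-shift operator out of the $y$-integral. Each term with $j\in\{1,\dots,2^n-1\}$ then carries the factor $\int_0^\infty y^{-(2^n-j)/2^n}m(y,t)\,dy=\E[E(t)^{-(2^n-j)/2^n}]$, which is finite since $(2^n-j)/2^n<1$; using the scaling identity $m(y,t)=(t/\beta)f(ty^{-1/\beta},1)y^{-1-1/\beta}$ from \eqref{E-density} and the substitution $u=ty^{-1/\beta}$, it equals $t^{-\beta(2^n-j)/2^n}\E[D(1)^{\beta(2^n-j)/2^n}]=t^{-\beta(2^n-j)/2^n}U((2^n-j)/2^n)$ (the moment being finite as $\beta(2^n-j)/2^n<\beta<1$). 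Finally, iterating $p_k'(z)=-\lambda(1-\bigtriangledown)p_k(z)$ gives $(-\lambda)^j(1-\bigtriangledown)^jp_k(0)=\partial_z^jp_k(z)|_{z=0}$, which reconciles the coefficients with those in the statement.

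The hard part will be the rigorous handling of the $\delta_0(y)$ term in \eqref{density-pde}: one must argue that integrating the distributional identity against the bounded continuous function $q^{(n)}_k$ produces a genuine pointwise-in-$t$ equation, and check that the half-line boundary term at $y=0$—controlled by $m(0^+,t)=t^{-\beta}/\Gamma(1-\beta)$—absorbs the delta contribution precisely. The remaining ingredients (differentiation under the integral, the single integration by parts, the finiteness checks near $0$ and $\infty$, and the negative-moment evaluation of $E(t)$) are routine given \eqref{D-asymptotic-0}, \eqref{D-asymptotic-large}, \eqref{E-density}, the recorded boundary properties of $m$, and the Corollary.
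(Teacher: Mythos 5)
Your proposal is correct and follows essentially the same route as the paper's own proof: the same subordination integral against $m(x,t)$, the same use of the distributional PDE \eqref{density-pde} with a single integration by parts whose boundary term at the origin cancels the $\delta_0$ contribution via $m(0^+,t)=t^{-\beta}/\Gamma(1-\beta)$, the same substitution of the Corollary's DDE for $\tfrac{d}{dy}q^{(n)}_k$, and the same evaluation of the negative moments $\E[E(t)^{-\gamma}]$ by the scaling of $E(t)$. The only cosmetic difference is that you derive the moment identity directly from the density scaling in \eqref{E-density} rather than citing $E(t)\stackrel{d}{=}(D(1)/t)^{-\beta}$ as the paper does.
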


 \begin{proof}
We can write
 \begin{equation}\tilde{q}_k(t) = \P\big(N(H_*^n(E(t))) =k\big)=\int_0^\infty \P\big(N(H_*^n(x)) =k\big)m(x,t)dx=\int_0^\infty
 {q}^*_k(x)m(x,t)dx.
 \end{equation}
  Using the fact that the density $m(x,t)$ is the fundamental solution of
\begin{equation}\label{density-pde-2}
\frac{\partial^\beta m(x,t)}{\partial t^\beta}=-\frac{\partial
m(x,t)}{\partial x}-\frac{t^{-\beta}} {\Gamma
(1-\beta)}\delta_0(x),
\end{equation}
 in the sense of tempered distributions, we  get
 \begin{equation}
 \begin{split}
 \frac{d^{\beta}}{dt^{\beta}}\tilde{q}_k(t)&=\int_0^\infty {q}^*_k(x)\frac{\partial^\beta m(x,t)}
 {\partial t^\beta}dx\\
 &=\int_0^\infty {q}^*_k(x)\bigg[-\frac{\partial m(x,t)}{\partial x}-\frac{t^{-\beta}}{\Gamma (1-\beta)}\delta_0(x)
 \bigg]dx\\
 &=-\int_0^\infty {q}^*_k(x)\frac{\partial m(x,t)}{\partial x}dx-q_k(0)\frac{t^{-\beta}}{\Gamma (1-\beta)}\\
 &=-q^*_k(0)\frac{t^{-\beta}}{\Gamma (1-\beta)}-{q}^*_k(x)m(x,t)|_{x=0}^\infty+\int_0^\infty \frac{d {q}^*_k(x)}{dx}
 m(x,t)dx\\
 &=-q^*_k(0)\frac{t^{-\beta}}{\Gamma (1-\beta)}+{q}^*_k(0)m(0,t)\\
 & +\int_0^\infty m(x,t)\bigg[\lambda^{2^n}(1-\bigtriangledown)^{2^n}q_k(x)+\sum_{j=1}^{2^n-1}\bigg(\frac{\partial^{j}}{\partial z^{j}}p_k(z)\bigg|_{z=0}\bigg)\frac{x^{-(2^n-j)/2^n}}{\Gamma (1-\frac{(2^n-j)}{2^n})}\bigg]dx\\
 &=-q^*_k(0)\frac{t^{-\beta}}{\Gamma (1-\beta)}+{q}^*_k(0)\frac{t^{-\beta}}{\Gamma (1-\beta)}+\lambda^{2^n}
 (1-\bigtriangledown)^{2^n}\int_0^\infty m(x,t)q_k(x)dx\\
 &+\sum_{j=1}^{2^n-1} \bigg(\frac{\partial^{j}}{\partial z^{j}}p_k(z)\bigg|_{z=0}\bigg)
 \int_0^\infty\bigg[\frac{x^{-(2^n-j)/2^n}}{\Gamma (1-\frac{(2^n-j)}{2^n})}\bigg]m(x,t)dx\\
 &=\lambda^{2^n}(1-\bigtriangledown)^{2^n}\tilde{q}_k(t)+\sum_{j=1}^{2^n-1}
 \bigg(\frac{\partial^{j}}{\partial z^{j}}p_k(z)\bigg|_{z=0}\bigg) \E\bigg[\frac{(E(t))^{-(2^n-j)/2^n}}{\Gamma
 (1-\frac{(2^n-j)}{2^n})}\bigg].\\
 \end{split}
 \end{equation}
  We can calculate the terms $\E[E(t)^{-\gamma}]$ for $0<\gamma<1$ as follows:
  First $E(t)\stackrel{(d)}{=}(D(1)/t)^{-\beta}$ by Corollary 3.1 Meerschaert and Scheffler (2004), hence
  $$
  \E[E(t)^{-\gamma}]=\E[(D(1)/t)^{\gamma\beta}]=t^{-\beta\gamma}\E[D(1)^{\gamma\beta}]=:t^{-\beta\gamma}U(\gamma)<\infty
  $$
   Hence, we have
   \beq
    \frac{d^{\beta}}{dt^{\beta}}\tilde{q}_k(t)=\lambda^{2^n}(1-\bigtriangledown)^{2^n-1}
    \tilde{q}_k(t)+\sum_{j=1}^{2^n-1} \bigg(\frac{\partial^{j}}{\partial z^{j}}p_k(z)\bigg|_{z=0}\bigg)
    \bigg[\frac{t^{-\beta(2^n-j)/2^n}U((2^n-j)/2^n)}{\Gamma
    (1-\frac{(2^n-j)}{2^n})}\bigg].
   \eeq

\end{proof}
\setcounter{equation}{0}
\section{Inverse of tempered stable processes as time-changes}
Note that tempered stable processes are obtained by exponential
tempering in the distribution of stable processes, see Rosinski \cite{Rosinski2007} for more details on tempering stable processes. The advantage
of tempered stable process over stable process is that they are
also infinitely divisible, and they have  moments of all order. Let
$f(x,t)$, $0<\beta<1$ denotes the density of a stable process (stable subordinator) with
LT
 \beq \int_{0}^{\infty}e^{-sx}f(x,t)
dx= e^{-ts^{\beta}}. \eeq A tempered stable subordinator
$D_{\mu}(t)$ has a density

\beq\label{ts-density} f_{\mu}(x,t)= e^{-\mu x+\mu^{\beta}t}
f(x,t),~~ \mu>0. \eeq
 The
L\'evy measure corresponding to a tempered stable process is given
by (see e.g. Cont and Tankov (2004, p. 115))

 \beq \pi_{D_\mu}(x) = \frac{ce^{-\mu
x}}{x^{\beta+1}},~c>0, ~x>0, \eeq which implies $\displaystyle
\int_{0}^{\infty}\pi_{D_\mu}(x)dx = \infty$ and hence using Theorem 21.3
of Sato (1999),
  the sample paths of $D_{\mu}(t)$ are strictly increasing since jumping times are dense in $(0,\infty)$. Further the LT
\beq\label{tempered-LT}
\tilde{f}_{\mu}(s,t)=\int_{0}^{\infty}e^{-sx}f(x,t)dx =
e^{-t((s+\mu)^{\beta}-\mu^{\beta})}. \eeq

\begin{proposition}
 The density function $f_{\mu}(x,t)$ of the tempered stable  process of $D_{\mu}(t)$ satisfies the following PDE.
 For $\beta =\frac{1}{m}, ~m\geq 2$
\beq\label{ts-pde} \sum_{j=1}^{m}(-1)^j {m\choose j}
\mu^{(1-j/m)}\frac{\partial^j}{\partial t^j}f_{\mu}(x,t) =
\frac{\partial}{\partial x}f_{\mu}(x,t). \eeq
\end{proposition}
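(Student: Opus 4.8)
The plan is to combine the tempering relation \eqref{ts-density} with the stable PDE \eqref{pde-stable-sub}, after recognizing the differential operator on the left of \eqref{ts-pde} as a binomial power of $1-\mu^{-1/m}\partial_t$. Write $\beta=1/m$, so $\mu^\beta=\mu^{1/m}$, and set
\[
L:=\sum_{j=1}^m(-1)^j{m\choose j}\mu^{1-j/m}\frac{\partial^j}{\partial t^j}.
\]
First, restoring the absent $j=0$ term and factoring out $\mu$, the binomial theorem gives $L+\mu=\mu\bigl(1-\mu^{-1/m}\partial_t\bigr)^m$, the scalar $\mu^{-1/m}$ commuting with $\partial_t$. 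By \eqref{ts-density}, $f_\mu(x,t)=e^{-\mu x}\,w(x,t)$ with $w(x,t):=e^{\mu^{1/m}t}f(x,t)$, and since $e^{-\mu x}$ is constant in $t$ it is enough to evaluate $L$ on $w$ and to compare the result with $\partial_x f_\mu$.

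The computational core is the exponential-shift identity $\bigl(1-\mu^{-1/m}\partial_t\bigr)\bigl(e^{\mu^{1/m}t}g\bigr)=-\mu^{-1/m}e^{\mu^{1/m}t}\,\partial_t g$, valid for any smooth $g=g(t)$: it is a one-line product-rule computation in which the two undifferentiated terms cancel because $\mu^{-1/m}\cdot\mu^{1/m}=1$. Iterating $m$ times,
\[
\bigl(1-\mu^{-1/m}\partial_t\bigr)^m w(x,t)=(-\mu^{-1/m})^m e^{\mu^{1/m}t}\frac{\partial^m f}{\partial t^m}(x,t)=(-1)^m\mu^{-1}e^{\mu^{1/m}t}\frac{\partial^m f}{\partial t^m}(x,t),
\]
and hence
\[
L f_\mu=e^{-\mu x}\bigl[\mu\bigl(1-\mu^{-1/m}\partial_t\bigr)^m-\mu\bigr]w=e^{\mu^{1/m}t-\mu x}\Bigl[(-1)^m\frac{\partial^m f}{\partial t^m}-\mu f\Bigr].
\]

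It remains to invoke \eqref{pde-stable-sub}. For our stable subordinator the Laplace exponent is $s^{\beta}$, i.e.\ $c=1$, and $\beta=1/m$ means $k=1$; thus \eqref{pde-stable-sub} reads $\partial_t^m f=(-1)^m\partial_x f$, so $(-1)^m\partial_t^m f=\partial_x f$ and
\[
L f_\mu=e^{\mu^{1/m}t-\mu x}\bigl[\partial_x f-\mu f\bigr].
\]
On the other hand the product rule applied to $f_\mu=e^{\mu^{1/m}t-\mu x}f$ gives $\partial_x f_\mu=e^{\mu^{1/m}t-\mu x}\bigl[\partial_x f-\mu f\bigr]$, the same expression; this is precisely \eqref{ts-pde}.

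Since $f$ is smooth on $(0,\infty)\times(0,\infty)$, every step above is pointwise and there is no analytic subtlety, so the main point to get right will be the algebraic bookkeeping: the identification $L+\mu=\mu(1-\mu^{-1/m}\partial_t)^m$, the exponential-shift identity, and the correct specialization ($c=1$, $k=1$) of \eqref{pde-stable-sub}. A conceptually equivalent alternative would be to Laplace-transform in $x$: by \eqref{tempered-LT}, $\tilde f_\mu(s,t)=e^{-t\psi(s)}$ with $\psi(s)=(s+\mu)^{1/m}-\mu^{1/m}$, so the left side of \eqref{ts-pde} transforms to $\bigl(\sum_{j=1}^m{m\choose j}\mu^{1-j/m}\psi(s)^j\bigr)e^{-t\psi(s)}=\mu\bigl[(1+\psi(s)\mu^{-1/m})^m-1\bigr]e^{-t\psi(s)}=s\,e^{-t\psi(s)}$, which equals the transform $s\,\tilde f_\mu(s,t)$ of $\partial_x f_\mu$ because $f_\mu(0^+,t)=0$ by \eqref{D-asymptotic-0}; one then concludes by injectivity of the Laplace transform, at the cost of a routine differentiation-under-the-integral justification.
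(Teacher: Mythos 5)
Your argument is correct, and your primary route is genuinely different from the paper's. The paper works entirely on the Laplace-transform side: it differentiates $\tilde f_\mu(s,t)=e^{-t((s+\mu)^{1/m}-\mu^{1/m})}$ in $t$, checks by hand for $m=2$ and $m=3$ that the binomial combination collapses to $s\tilde f_\mu(s,t)-f_\mu(0,t)$ (using $f_\mu(0,t)=0$), inverts the transform, and asserts that general $m$ ``follows similarly.'' Your secondary sketch is exactly this computation carried out for all $m$ at once via the identity $\mu\bigl[(1+\psi(s)\mu^{-1/m})^m-1\bigr]=s$, so it actually closes the step the paper leaves implicit. Your primary route instead stays in physical space: you factor the operator as $L+\mu=\mu\bigl(1-\mu^{-1/m}\partial_t\bigr)^m$, push it through the tempering relation \eqref{ts-density} with the exponential-shift identity, and reduce \eqref{ts-pde} to the DeBlassie equation \eqref{pde-stable-sub} specialized to $c=1$, $k=1$. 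This buys a uniform, purely pointwise proof for every $m\geq 2$ that needs neither the boundary value $f_\mu(0,t)=0$ nor a Laplace inversion, at the cost of importing \eqref{pde-stable-sub}, which the paper itself records (citing DeBlassie), so you are entitled to it. Both of your computations check out, and your conclusion agrees in sign with the stated proposition.
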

\begin{proof}
 We prove this result by induction. From \eqref{tempered-LT} we have
 \beq
 \tilde{f}_{\mu}(s,t) = e^{-t((s+\mu)^{\beta}-\mu^{\beta})}.
 \eeq
 For $m=2$,
 \beq\label{ts1}
 \frac{\partial}{\partial t}\tilde{f}_{\mu}(s,t) = -((s+\mu)^{1/2}-\mu^{1/2})\tilde{f}_{\mu}(s,t)
 \eeq
 and
 \beq\label{ts2}
 \frac{\partial^2}{\partial t^2}\tilde{f}_{\mu}(s,t) = ((s+\mu)^{1/2}-\mu^{1/2})^2\tilde{f}_{\mu}(s,t).
 \eeq
 Using \eqref{ts1}, \eqref{ts2} and using the fact that $f_{\mu}(0,t) =0$ (see \eqref{eq3.3n} and \eqref{ts-density}), we get
 \begin{align*}
 \left( \frac{\partial^2}{\partial t^2}-2\mu^{1/2}\frac{\partial}{\partial t}\right)\tilde{f}_{\mu}(s,t)
  =s\tilde{f}_{\mu}(s,t)-f_{\mu}(0,t).
 \end{align*}
Inverting the LT to get \beq \left( \frac{\partial^2}{\partial
t^2}-2\mu^{1/2}\frac{\partial}{\partial
t}\right)\tilde{f}_{\mu}(s,t) = \frac{\partial}{\partial
x}\tilde{f}_{\mu}(s,t).
\eeq

Simililarily, for $m=3$ \beq \left(\frac{\partial^3}{\partial
t^3}-3\mu^{1/3}\frac{\partial^2}{\partial t^2}+
3\mu^{2/3}\frac{\partial}{\partial t}\right)\tilde{f}_{\mu}(s,t) =
(-1)^3\frac{\partial}{\partial x}\tilde{f}_{\mu}(s,t). \eeq

The result also follows similarily for a general $m$.
\end{proof}

\begin{remark}
Let $r_k(t)=P(N(D_{\mu}(t))=k)$. Then the pmf $r_k(t)$ satisfies

\beq \sum_{j=1}^{m}(-1)^j {m\choose j} \mu^{(1-j/m)}\frac{d^j}{d
t^j}r_k(t) = \lambda(1-\bigtriangledown)r_k(t). \eeq

\end{remark}

\noindent Using the steps as the proof of Proposition 4.1., we can
show that
 the density function $m_{\mu}(x,t)$ of the hitting time process $E_{\mu}(t)$ of
$D_{\mu}(t)$ satisfies, for $\beta =\frac{1}{m}$ and $m\geq 2$,

 \beq\label{tempered}
 \sum_{j=1}^{m}(-1)^j {m\choose
j} \mu^{(1-j/m)}\frac{\partial^j}{\partial x^j}m_{\mu}(x,t) =
\frac{\partial}{\partial t}m_{\mu}(x,t)+\delta_0(t)m_{\mu}(x,0).
 \eeq

\noindent When $\mu =0$,  \eqref{tempered} reduces to \beq
(-1)^m\frac{\partial^m}{\partial x^m}m_0(x,t)
=\frac{\partial}{\partial t}m_0(x,t) +\delta_0(t)m_0(x,0) \eeq

\noindent Let $E_{\mu}(t)$ denote the inverse of the tempered
stable subordinator $D_{\mu}(t)$.  By Theorem 3.1 in
Meerschaert and Scheffler (2008) we have the density of
$E_{\mu}(t)$ as
$$
m_{\mu}(x,t)=\int_0^t \pi_{D_\mu} (t-y,\infty)f_{\mu}(y,x) dy.
$$
Since the density $f_{\mu}(y,x)$ of $D_{\mu}(x)$ is infinitely
differentiable, $m_{\mu}(x,t)$ is also infinitely differentiable.

 Then we have the following result.

\begin{proposition} The pmf $\tilde{r}_k(t) = P(N(E_{\mu}(t)) = k)$ satisfies the following
PDE:
\beq
\begin{split}
 \frac{d}{dt}\tilde{r}_k(t) &= \sum_{j=1}^{m}{m\choose j}\mu^{(1-j/m)}(-\lambda(1-\bigtriangledown))^j\tilde{r}_k(t)
 + \sum_{j=1}^{m}\sum_{k=1}^{j}(-1)^{j+k}{m\choose j}\mu^{(1-j/m)}\\
 &\hspace{3cm}\times\frac{\partial^{k-1}}{\partial x{k-1}}p_k(x)\frac{\partial^{j-k}}{\partial
 x{j-k}}m_{\mu}(x,t)\Big|_{x=0}-\delta_0(t)r_{k}(0).
\end{split}
\eeq
\end{proposition}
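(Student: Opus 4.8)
The plan is to mimic the structure of the proof of Proposition~2.2 (the hitting-time case for $N(H(t))$), using the PDE \eqref{tempered} governing the density $m_\mu(x,t)$ of $E_\mu(t)$ in place of \eqref{h-eq-second-order}. First I would write
$$
\tilde{r}_k(t) = \int_0^\infty p_k(x)\, m_\mu(x,t)\, dx,
$$
and, justifying differentiation under the integral sign by the smoothness and decay of $m_\mu$, compute $\frac{d}{dt}\tilde r_k(t) = \int_0^\infty p_k(x)\,\partial_t m_\mu(x,t)\,dx$. I would then substitute $\partial_t m_\mu(x,t)$ from \eqref{tempered}, which expresses it as $\sum_{j=1}^m (-1)^j\binom{m}{j}\mu^{1-j/m}\partial_x^j m_\mu(x,t) - \delta_0(t) m_\mu(x,0)$. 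The last term contributes $-\delta_0(t)\int_0^\infty p_k(x) m_\mu(x,0)\,dx = -\delta_0(t) r_k(0)$ (interpreting $r_k(0)$ as $\P(N(E_\mu(0))=k)$), which matches the final term in the claimed identity.

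Next I would handle the main sum $\sum_{j=1}^m (-1)^j\binom{m}{j}\mu^{1-j/m}\int_0^\infty p_k(x)\,\partial_x^j m_\mu(x,t)\,dx$ by integrating by parts $j$ times in $x$. Each integration by parts moves one derivative from $m_\mu$ onto $p_k$ and produces a boundary term at $x=0$ (the terms at $x=\infty$ vanish by the decay of $m_\mu$ and its $x$-derivatives, which follows from the infinite differentiability and integrability established via the representation $m_\mu(x,t) = \int_0^t \pi_{D_\mu}(t-y,\infty) f_\mu(y,x)\,dy$). The fully integrated term gives $\int_0^\infty \partial_x^j p_k(x)\, m_\mu(x,t)\,dx$, and since $\partial_x^j p_k(x) = (-\lambda(1-\bigtriangledown))^j p_k(x)$ for the Poisson pmf, these assemble into $\sum_{j=1}^m \binom{m}{j}\mu^{1-j/m}(-\lambda(1-\bigtriangledown))^j \tilde r_k(t)$ after absorbing the $(-1)^j$ signs. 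The accumulated boundary terms, indexed by the number $k$ of integrations by parts already performed (say $k=1,\dots,j$), give exactly
$$
\sum_{j=1}^m\sum_{k=1}^j (-1)^{j+k}\binom{m}{j}\mu^{1-j/m}\,\partial_x^{k-1}p_k(x)\,\partial_x^{j-k}m_\mu(x,t)\Big|_{x=0},
$$
which is the middle sum in the statement (the clash of the index $k$ between the pmf subscript and the summation variable is as printed in the paper).

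The routine parts are the two integration-by-parts bookkeeping tasks: tracking signs, and confirming that all $x=\infty$ boundary contributions vanish. The one genuinely delicate point is the justification of differentiating under the integral in $t$ and of the interchange of the $\delta_0(t)$ term with the integral — i.e., that the distributional identity \eqref{tempered} may be integrated against $p_k(x)$ and paired with $\tilde r_k(t)$ in the stated sense. I would dispose of this exactly as in Proposition~2.2 and in the inverse-stable-subordinator propositions above: invoke the smoothness of $m_\mu(x,t)$ on $(0,\infty)\times(0,\infty)$, the uniform integrability furnished by the convolution representation with $\pi_{D_\mu}(\cdot,\infty)$ and $f_\mu$, and dominated convergence, treating the $\delta_0(t)$ contribution in the weak sense (as done throughout the paper for $H(t)$ and $E(t)$). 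Once these interchanges are granted, the computation is a direct parallel of the $N(H(t))$ argument with $m=2$ replaced by general $m$ and the binomial weights $(-1)^j\binom{m}{j}\mu^{1-j/m}$ carried along.
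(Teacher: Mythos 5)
Your proposal follows essentially the same route as the paper's own proof: differentiate under the integral, substitute the PDE \eqref{tempered} for $\partial_t m_\mu(x,t)$, integrate by parts $j$ times collecting the boundary terms at $x=0$, and use $\partial_x^j p_k(x)=(-\lambda(1-\bigtriangledown))^j p_k(x)$ to assemble the result. Your sign bookkeeping in the main sum (absorbing the two factors of $(-1)^j$) actually matches the stated proposition more cleanly than the paper's final displayed line, which retains a spurious $(-1)^j$; otherwise the arguments coincide.
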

\begin{proof}
We have
\begin{align*}
\frac{d}{dt}\tilde{r}_k(t)& =
\int_{0}^{\infty}p_k(x)\frac{\partial}{\partial t}m_{\mu}(x,t)dx\\
&=\sum_{j=1}^{m}(-1)^j {m\choose j}
\mu^{(1-j/m)}\int_{0}^{\infty}p_k(x)\frac{\partial^j}{\partial
x^j}m_{\mu}(x,t)dx-\delta_0(t)r_{k}(0)\\
&=\sum_{j=1}^{m}(-1)^j {m\choose j}
\mu^{(1-j/m)}\Big[\sum_{k=1}^{j}(-1)^k\frac{\partial^{k-1}}{\partial
x^{k-1}}p_k(x)\frac{\partial^{j-k}}{\partial
x^{j-k}}m_{\mu}(x,t)\Big|_{x=0}\\
&\hspace{2cm}+(-1)^j\int_{0}^{\infty}\frac{d^j}{dx^j}p_k(x)m_{\mu}(x,t)dx\Big]-\delta_0(t)r_{k}(0)\\
&=\sum_{j=1}^{m}\sum_{k=1}^{j}(-1)^{j+k}{m\choose
j}\mu^{(1-j/m)}\frac{\partial^{k-1}}{\partial
x^{k-1}}p_k(x)\frac{\partial^{j-k}}{\partial
x^{j-k}}m_{\mu}(x,t)\Big|_{x=0}\\
 &\hspace{2cm}+\sum_{j=1}^{m}(-1)^j {m\choose
j}\mu^{(1-j/m)}(-\lambda(1-\bigtriangledown))^jr_k(t)-\delta_0(t)r_{k}(0).
\end{align*}
Hence the result.
\end{proof}

\vtwo
\noindent
{}

\section{Notation}
\begin{eqnarray*}
\mbox{variable}&\,\,\mbox{density}\\
G(t)&\,\,g(x,t)\\
\delta=1/\sqrt{2},\gamma=0\,\,& k(x,t)\\
H(t)&\,\,h(x,t)\\
\delta=1/\sqrt{2},\gamma=0\,\,& l(x,t)\\
\beta- stable \,D(t)&\,\,f(x,t)\\
\beta/2-stable\,Y(t)&\,\,p(x,t)\\
1/2-stable \,n^* &\,\, k^*(x,t)\\
E(t)&\,\, m(x,t)\\
\delta=1/\sqrt{2},\gamma=0\,\,& l(x,t)\\
1/2-inverse \,stable\, n^* &\,\, l^*(x,t)\\
Caucy \,C(t)&\,\, q(x,t)\\
TS \,D_{\mu}(t)&\,\, f_{\mu}(x,t)\\
Hitting \,time\, E_{\mu}(t)&\,\, m_{\mu}(x,t)
\end{eqnarray*}

\end{document}